\documentclass[reqno, 11pt]{amsart}
\usepackage{ifthen}

\usepackage{setspace}
\usepackage[a4paper, left=3cm, right=3cm, top=4cm]{geometry}
\usepackage{amssymb, amsthm, epsf, epsfig}
\usepackage{amsmath}
\usepackage{verbatim}
\usepackage{caption}
\usepackage{subcaption}
\usepackage{dsfont}
\usepackage{wrapfig}
\usepackage[shortlabels]{enumitem}
\usepackage[usenames,dvipsnames]{xcolor}
\usepackage{esvect}

\newtheorem{theorem}{Theorem}
\newtheorem{proposition}{Proposition}
\newtheorem{lemma}{Lemma}

\theoremstyle{definition}
\newtheorem{definition}{Definition}
\newtheorem{example}{Example}

\theoremstyle{remark}

\newcommand{\C}{\mathbb{C}}
\newcommand{\E}{\mathbb{E}}
\newcommand{\PP}{\mathbb{P}}
\newcommand{\V}{{\mathbb{V}\mbox{ar}}}

\newcommand{\map}[1]{\textbf{#1}}

\numberwithin{equation}{section}

\title{Asymptotic normality of pattern counts in random maps II}

\author{Eva-Maria Hainzl}
\thanks{This work was partially founded by the Austrian Science
Foundation FWF, projects P 35016, F 100203 and the French project ANR LOUCCOUM.}

\begin{document}

\maketitle 
\begin{abstract}
    In a recent work, a central limit theorem for pattern counts in random planar maps was proven by reducing the problem to a face count problem~\cite{patternocc}. We provide a shorter proof by circumventing this reduction through the computation of bivariate coefficient asymptotics from a functional equation with one catalytic variable and extend the result to pattern counts with arbitrary boundary and new map classes. 
\end{abstract}


\section{Introduction}

Pattern counts in random planar maps have been of interest since the early 90s~\cite{BenderGaoRichmond,GaoWormald,GaoWormald2,DrmotaStufler,Yu}. In a recent work~\cite{patternocc}, the authors proved a Gaussian central limit theorem for pattern counts in random planar maps for patterns with simple boundary. The main results in the following is an extension of limit theorem to counts of patterns with arbitrary boundary and analogous limit theorems in bipartite and 2-connected maps. In particular, the latter class of maps was not amenable to the original proof strategy that consisted of reducing the pattern count problem to a face count problem because the reduction to face counts can create cut vertices in the original 2-connected map.\\
We give a direct proof that avoids the reduction to face counts through bivariate coefficient extraction from catalytic variable equations.  
We lay out the details of this simplified proof along planar maps in the next section and subsequently prove an extension of the central limit theorem in bipartite and 2-connected maps.

The foundation of our method lies in the recursive enumeration of rooted planar maps, which dates back to the seminal work of Tutte in the 1960s. His decomposition of planar maps leads to functional equations with a catalytic variable for the corresponding generating functions. For instance, if $M(z, u)$ denotes the generating function where $z$ marks edges and $u$ marks the valency of the root face, the classical Tutte equation (\cite{tutte_1963}) reads:
\[
    M(z,u) = 1 + zu^2 M(z, u)^2 + zu \cdot \frac{M(z, u) - M(z, 1)}{u - 1}.
\]
This formulation can be enriched by additional variables to count specific substructures. For example, by introducing a variable $x$ to mark non-root faces with valency~$2$, we obtain:
\[
    M(z, u, x) = 1 + zu^2 M(z, u, x)^2 + zu  \frac{M(z, u, x) - M(z, 1, x)}{u - 1} + z(x-1)(M(z, u, x) - 1).
\]

In this framework, Drmota and Panagiotou \cite{DrPa} established central limit theorems for faces of valency $k$, while Yu~\cite{Yu} handled the case of simple $k$-gons which are valency $k$ faces with a boundary consisting of $k$ distinct vertices and $k$ distinct edges. However, a key limitation of these approaches applied to more general patterns is that they break down when patterns are allowed to self-intersect: no single equation using a counting parameter can in general describe overlapping pattern occurrences. Drmota, Hainzl and Wormald~\cite{patternocc} circumvented this problem through a theorem by Gao and Wormald~\cite{GaoWormald}, leveraging the asymptotics of high moments of face counts. We simplify this idea and compute directly the asymptotics of the high moments of the pattern counts.\\

\section{A simplified proof of asymptotic normality of pattern counts}

We start by introducing some basic definitions and notations.
\begin{definition}[Rooted planar maps]
    A \emph{planar map} is a connected planar graph (with loops and multiple edges allowed) embedded onto the sphere. If one of its edges is oriented, we call it \emph{rooted}. The oriented edge is further called the \emph{root edge} and the vertex from which the root edge is pointing away the \emph{root vertex}.
    A planar map separates the surface into several connected regions called \emph{faces}. The face to the left of the root edge is called the \emph{root face} or the \emph{exterior face}.
    The \emph{valency} of a face is the number of edges incident to it, bridges being counted twice. A face of valency $m$ is called an \emph{$m$-gon}.
    We define the \emph{boundary} of a rooted map as the set of all edges and vertices incident with the root face.

    We count rooted maps up to root-preserving isomorphism according to their number of edges and we denote the generating function of planar maps by
        \[
            M(z,u) = \sum_{n,j\geq 0} m_{j,n}u^jz^n,
        \]
    where $z$ marks the number of edges and $u$ the root face valency.
\end{definition}

Now we are interested in the number of pattern occurrences of a fixed map $\map{p}$. Even though the term seems self-explanatory, formally, a pattern occurrence is defined as follows.

\begin{definition}[Pattern occurrences~\cite{BenderGaoRichmond} ]\label{def:submaps}
    Let~\map{p} be a rooted map. We say that~\map{p} occurs as a pattern in a
    map~\map{m} if~\map{m} can be obtained by extending~\map{p} in the following way:
    \begin{enumerate}
        \item[(a)] adding vertices to the interior of the root face of~\map{p},
        \item[(b)] adding edges with their endpoints being either vertices or edges from the
        boundary of~\map{p} or newly created vertices,
        \item[(c)] rerooting the so obtained map in such a way that its new root face is not contained in an interior face of~\map{p}.
    \end{enumerate}
\end{definition}

The main result for pattern counts in random maps is an extension of Theorem 1 in \cite{patternocc}.

\begin{theorem}\label{thm:clt}
    Let $\map{p}$ be a planar map and $X_n$ the number of pattern occurrences of $\map{p}$ in a random planar map of size $n$. Then,
    \[
        \frac{X_n-\mu_n}{\sigma_n} \rightarrow \mathcal{N}(0,1)
    \]
    where $\mu_n = c_1n$ and $\sigma_n = c_2\sqrt{n}$ for some computable constants $c_1,c_2 \geq 0$ as $n\rightarrow \infty$.
\end{theorem}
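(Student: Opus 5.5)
We will prove the statement by the method of moments, in the form of the Gao--Wormald criterion used in \cite{patternocc}. It suffices to show the following: for every fixed $k\ge 1$, the $k$-th factorial moment satisfies
\[
\E (X_n)_k = \mu_n^k \exp\!\Big(\tfrac{k^2\sigma_n^2-\mu_n}{2\mu_n^2}\Big)(1+o(1))
\]
uniformly for $k$ in a suitable range, with $\mu_n = c_1 n$ and $\sigma_n^2 = c_2^2 n$, where $\sigma_n \ll \mu_n$. This yields the Gaussian limit. The factorial moment $\E(X_n)_k$ counts maps of size $n$ together with an ordered $k$-tuple of distinct occurrences of $\map{p}$, divided by $m_n=[z^n]M(z,1)$.

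\textbf{Step 1: a catalytic equation for maps with marked occurrences.} We write a Tutte-type equation for $M(z,u,x)$, where $x-1$ marks a chosen occurrence. Equivalently, we count maps with a set of marked occurrences, each weighted by $w=x-1$, so that $[w^k]$ yields $\E\binom{X_n}{k}$ times $m_n$. We first handle the case of non-overlapping marked occurrences. An occurrence of $\map{p}$ can be contracted to a single pattern face whose boundary walk is the root-face walk of $\map{p}$, which may be non-simple. In the root-edge deletion of Tutte's decomposition, when the root face is bounded by a marked pattern, one removes the whole pattern at once. This gives a term $w\,z^{e(\map{p})}\,Q(u)$, where $Q$ is an explicit linear operator built from divided differences in $u$, recording how the boundary of $\map{p}$ is glued to the rest of the map. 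The resulting equation is polynomial in $M(z,u,x)$ and in finitely many $M(z,1,x)$, $\partial_u^j M(z,1,x)$, with coefficients analytic in $x$ near $1$.

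Overlapping occurrences are the self-intersection issue. For fixed $k$, a $k$-tuple of pairwise overlapping occurrences forms one of finitely many ``cluster'' submaps. We will therefore only need clusters up to bounded size, and we show that overlapping tuples contribute a lower-order factor $O(1/n)$ relative to disjoint ones. This is the same combinatorics as in \cite{patternocc}, but it is applied to pattern occurrences directly rather than after reduction to faces.

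\textbf{Step 2: bivariate asymptotics.} We apply the quadratic method / Bousquet-Mélou--Jehanne theorem to the equation with $x$ as a parameter near $1$. This shows that $M(z,1,x)$ has a square-root-type singularity with exponent $3/2$ at $z=\rho(x)$, where $\rho$ is analytic with $\rho'(1)\ne 0$. The local expansion is uniform in $x$ in a complex neighbourhood of $1$. Transfer theorems then give
\[
[z^n]M(z,1,x) \sim C(x)\,n^{-5/2}\rho(x)^{-n}
\]
uniformly. The quasi-power theorem of Hwang then directly yields the central limit theorem, with
\[
c_1 = -\rho'(1)/\rho(1), \qquad c_2^2 = c_1 + c_1^2 - \rho''(1)/\rho(1).
\]
These constants are computable from the system. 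So if the equation with $x-1$ weighting marked sets of occurrences is exact (cluster method), the proof ends here without moment computations.

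\textbf{Main obstacle.} The main obstacle is exactness versus overlaps. A genuinely exact equation requires a Goulden--Jackson cluster expansion, in which marked sets may overlap and are glued into clusters. The cluster generating function must be an analytic function of $w$ near $0$, so that the singularity of the catalytic system moves analytically.

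My first attempt is to show that clusters of $\map{p}$ form a class whose generating function in $w$ is analytic, because each cluster adds edges while overlaps are bounded locally. If this fails, I fall back to computing only the $k$-th factorial moments for fixed $k$. These come from $[w^k]$ of the truncated equation via $\partial_x^k$ at $x=1$, i.e.\ from derivatives of the singular expansion. I would then check the Gao--Wormald condition, which requires controlling $k$ growing slowly with $n$. That uniformity in $k$ is the delicate analytic point, and I would handle it with Cauchy estimates on a fixed $x$-disk around $1$.

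Finally, I would check nondegeneracy $c_2>0$ (the statement allows $\ge0$). This follows from $\rho$ not being of the form $\rho(1)x^{-c}$, which can be verified by exhibiting maps of equal size with different numbers of occurrences.
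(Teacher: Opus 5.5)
\paragraph{Your primary route.}
The quasi-power route does not go through. Hwang's theorem needs an \emph{exact} equation in which $x$ marks all occurrences, with a singular expansion $C(x)n^{-5/2}\rho(x)^{-n}$ uniform on a complex neighbourhood of $x=1$. You do not construct this equation, and for patterns with a pinch point it cannot exist.

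Take the fly (two 2-gons meeting at a vertex). A vertex carrying $n/2$ petals contains at least $\binom{n/2}{2}$ occurrences, since every pair of petals is one. Hence for real $x>1$ and even $n$ you get $\E[x^{X_n}]\ge 12^{-n}x^{\binom{n/2}{2}}$. This grows superexponentially, so $\sum_{\map{m}}x^{X(\map m)}z^{e(\map m)}$ has radius of convergence $0$ in $z$. Consequently neither a quasi-power form nor a cluster series analytic in $w=x-1$ exists.

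Even for simple boundaries, an exact Goulden--Jackson equation would need clusters of unbounded size glued along boundaries of unbounded length. That means infinitely many unknowns $[u^j]M(z,u,x)$ (coefficients at $u=0$, not $\partial_u^jM(z,1,x)$). This lies outside Theorem~\ref{thm:dny} and outside Bousquet-M\'elou--Jehanne, which in any case yields algebraicity, not a uniform $3/2$-singularity.

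\paragraph{Your fallback.}
The fallback misidentifies both the regime and the role of overlaps. Theorem~\ref{thm:gao} needs the factorial moments uniformly for $c\mu_n/\sigma_n\le k\le c'\mu_n/\sigma_n$, i.e.\ $k=\Theta(\sqrt n)$. Fixed or slowly growing $k$ does not suffice.

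In that regime overlaps are not negligible. A $k$-tuple of occurrences contains on average $\Theta(k^2/n)=\Theta(1)$ intersecting pairs, and these pairs feed directly into the factor $\exp\bigl(\frac{k^2}{2}\frac{\sigma_n^2-\mu_n}{\mu_n^2}\bigr)$. Your claim that overlaps only cost a relative $O(1/n)$ already breaks the argument at $k=2$. There $\E(X_n)_2-\mu_n^2=\sigma_n^2-\mu_n=\Theta(n)$ is itself of relative order $1/n$, and intersecting pairs contribute to it at that same order. Discarding them therefore gives the wrong variance.

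The missing ingredient is Lemma~\ref{lem:S1}. For $k=\Theta(\sqrt n)$ it shows:
\begin{itemize}
\item labelings in which one labeled occurrence meets two others form a fraction $O(c^2k^3/p^2)=o(1)$;
\item maps with fewer than $\mu_n/2$ occurrences are absorbed in the $(\mu_n/2)^km_n$ term;
\item for pinch points, the number $c\le\binom{\Delta}{2\ell}$ of ways to intersect is controlled by the exponential tail of the maximal degree.
\end{itemize}
So the pairwise intersection types must be kept \emph{exactly}: these are the $x^2$ terms with $r_i$ rotations and deep faces filled by simple-boundary maps. Only larger clusters may be dropped.

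Finally, extracting $[x^k]$ at $k=\Theta(\sqrt n)$ with the precise $e^{\frac{k^2}{2n}(\cdots)}$ factor requires a saddle-point contour of radius $\asymp k/n\to0$, as in Lemma~\ref{lem:bivar}. Cauchy bounds on a fixed disk are off by a factor $e^{\Theta(n)}$.
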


Even though the proof of this theorem almost immediately follows from Lemma~\ref{lem:S1} below, we present a simplified proof that naturally extends to further map classes which where not covered by the proof strategy in \cite{patternocc} (such as 2-connected maps).\\

The proof consists of two main ingredients. The first is the theory of universal asymptotic behavior of solutions to discrete differential equations with one catalytic variable (see Theorem~\ref{thm:dny}) as proven in~\cite{DrmotaNoyYu}. The second is a powerful theorem by Gao and Wormald~\cite{GaoWormald}, which provides conditions under which asymptotic normality follows from the asymptotics of factorial moments (see Theorem~\ref{thm:gao}). Both have been used independently to prove central limit theorems for counts of patterns with cannot self-intersect.

However, if the pattern has a boundary with a pinch point or if pattern occurrences can overlap in numerous ways, it is unclear how to capture this overlapping behavior recursively. A key to resolve this issue is considering high moments of the number of pattern occurrences $X_n$ of a fixed map \map{p} in a random map of size $n$. The following theorem leverages moments of order $k = \Theta(\sqrt{n})$ and entails a central limit theorem.

\begin{theorem}[Gao and Wormald, \cite{GaoWormald}]\label{thm:gao}
    Let $X_n$ be a sequence of non-negative integer-valued random variables with sequences $\mu_n \to \infty$ and $\sigma_n > 0$ satisfying
    \[
        \sigma_n \log^2 \sigma_n = o(\mu_n), \quad \mu_n = o(\sigma_n^3).
    \]
    Suppose that
    \[
        \mathbb{E}[(X_n)_k] \sim \mu_n^k \exp\left( \frac{k^2}{2} \cdot \frac{\sigma_n^2 - \mu_n}{\mu_n^2} \right)
    \]
    uniformly for $c\mu_n/\sigma_n \le k \le c'\mu_n/\sigma_n$, for constants $0 < c < c'$. Then,
    \[
        \frac{X_n - \mu_n}{\sigma_n} \xrightarrow{d} \mathcal{N}(0, 1).
    \]
\end{theorem}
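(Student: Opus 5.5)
The plan is to turn the factorial-moment asymptotics into asymptotics of a (shifted) moment generating function of $(X_n-\mu_n)/\sigma_n$. The starting point is the identity
\[
\E\bigl[(1+t)^{X_n}\bigr]=\sum_{k\ge 0}\E[(X_n)_k]\,\frac{t^k}{k!}.
\]
All terms are nonnegative for $t>0$, since $X_n$ is a nonnegative integer. I choose $t=t_n=s/\sigma_n$ with $s$ ranging over a fixed compact subinterval $I\subset(0,\infty)$. Then the terms $\mu_n^k t^k/k!$ have their Poisson-type peak at $k\approx \mu_n t=s\mu_n/\sigma_n$. This lies inside the window $c\mu_n/\sigma_n\le k\le c'\mu_n/\sigma_n$ once $c<s<c'$, so I choose $c,c'$ and $I$ with $I\subset (c,c')$. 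The peak has width of order $\sqrt{\mu_n/\sigma_n}$. Because $\sigma_n\log^2\sigma_n=o(\mu_n)$, this width is $o(\mu_n/\sigma_n)$, and it remains so even after multiplying by a $\log\sigma_n$ factor. Hence a window of that enlarged width around the peak lies well inside the admissible range.

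On the window I insert the hypothesis, writing $\lambda_n=(\sigma_n^2-\mu_n)/\mu_n^2$. The factor $\exp(k^2\lambda_n/2)$ varies slowly compared with the Poisson weights: $k\lambda_n$ times the peak width is $o(1)$ by the growth conditions. I therefore evaluate it at $k=\mu_n t$ with a relative error of $1+o(1)$. Summing the Poisson weights over the window recovers $e^{\mu_n t}(1+o(1))$, so the central contribution is
\[
\exp\Bigl(\mu_n t+\tfrac{t^2}{2}(\sigma_n^2-\mu_n)\Bigr)(1+o(1)).
\]
Next I set $u=\log(1+t)$, so that $t=u+u^2/2+O(u^3)$. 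This gives $\mu_n t+\tfrac{t^2}{2}(\sigma_n^2-\mu_n)=\mu_n u+\tfrac{\sigma_n^2u^2}{2}+O(\mu_n u^3+\sigma_n^2u^3)$. With $u\sim s/\sigma_n$, the error $\mu_n u^3\asymp \mu_n/\sigma_n^3$ is $o(1)$ precisely by the hypothesis $\mu_n=o(\sigma_n^3)$, and $\sigma_n^2u^3=o(1)$ trivially. Rescaling by $\tilde s=\sigma_n u=s+O(1/\sigma_n)$ would then give
\[
\E\bigl[e^{\tilde s (X_n-\mu_n)/\sigma_n}\bigr]\to e^{\tilde s^2/2}
\]
uniformly on $I$, provided the terms outside the window are negligible.

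The main obstacle is the tails $k$ outside the window, where nothing is assumed about $\E[(X_n)_k]$. For those I would not use the generating-function identity directly. Instead I would work with probabilities: by Bonferroni inequalities, truncated alternating sums of $\E[(X_n)_k]t^k/k!$ bound the probabilities, and I would compare against a slightly different parameter $t'$ to obtain a tail bound. Concretely, I would first prove that $\PP\bigl(|X_n-\mu_n|>K\sigma_n\sqrt{\log\sigma_n}\bigr)$ is tiny. To do this I would use the asymptotics at the window endpoints $k=c\mu_n/\sigma_n$ and $k=c'\mu_n/\sigma_n$ together with Markov's inequality on $(X_n)_k$, which is available since these $k$ are in range. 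Once $X_n$ is localized, the contribution of $k$ far from $\mu_n t$ in $\sum_k\binom{X_n}{k}t^k$ is controlled deterministically by the Poisson-type decay of binomial terms. This is where the $\log^2\sigma_n$ margin in the hypothesis is consumed.

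Finally, convergence of moment generating functions of $(X_n-\mu_n)/\sigma_n$ to $e^{s^2/2}$ on an open interval of positive $s$ (not necessarily containing $0$) suffices for weak convergence to $\mathcal N(0,1)$. I would verify this by exponential tilting: the tilted laws have converging moment generating functions in a neighbourhood of $0$, so Curtiss' theorem applies to them, and untilting gives the claim. If that route proves delicate, I would instead extend the estimate to complex $s=a+i\tau$ with $a$ in $I$. The same saddle computation goes through with $|t|\asymp 1/\sigma_n$, and from that I would conclude via characteristic functions of the tilted variables.
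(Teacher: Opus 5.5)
The paper does not prove this statement. It quotes it from Gao and Wormald and uses it as a black box, so your argument has to stand on its own.

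Much of it is sound. The Poisson-window computation is correct, including $k\lambda_n\sqrt{\mu_n/\sigma_n}=o(1)$. So is the substitution $u=\log(1+t)$ with error $\mu_n u^3=o(1)$. The untilting step also works: Curtiss applied to the tilted laws gives vague convergence to $\mathcal{N}(0,1)$, and a limit of full mass upgrades this to weak convergence.

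\textbf{The gap is the tail step.} The intermediate claim you aim for, $\E[e^{\tilde s(X_n-\mu_n)/\sigma_n}]\to e^{\tilde s^2/2}$, does not follow from the hypotheses. No localization of $X_n$ in probability can give it, because the off-window part of $\sum_k\E[(X_n)_k]t^k/k!$ is governed by $\E[(1+t)^{X_n}\mathbf{1}\{X_n>M\}]$, not by $\PP(X_n>M)$.

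For a concrete failure, take any admissible $X_n$ and, with probability $\epsilon_n=A^{-2c'\mu_n/\sigma_n}$, replace it by $\lfloor A\mu_n\rfloor$. Every $\E[(X_n)_k]$ with $k\le c'\mu_n/\sigma_n$ changes by a relative amount at most $\exp(-c'(\log A)\mu_n/\sigma_n+c'^2\mu_n/(2\sigma_n^2))=o(1)$, so the hypothesis still holds. Yet once $s(A-1)>2c'\log A$, $\E[(1+s/\sigma_n)^{X_n}]$ exceeds your main term by a factor $e^{\Theta(\mu_n/\sigma_n)}$. So the proviso ``provided the terms outside the window are negligible'' is exactly what fails, and your final MGF step has nothing valid to act on.

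The tools you name would not close this anyway:
\begin{itemize}
\item Bonferroni truncations need factorial moments of all orders from the start of the alternating sum, far outside the window.
\item Markov on $(X_n)_k$ controls only the upper tail, and only as $\PP(X_n\ge\mu_n+y\sigma_n)\lesssim e^{c'^2/2-c'y}$.
\end{itemize}

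\textbf{How to repair it: truncate instead of localize.} Put $M_n=\mu_n+\omega_n\sigma_n$ with $\omega_n\to\infty$ slowly, and work with $\E[(1+t)^{X_n}\mathbf{1}\{X_n\le M_n\}]$.

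For the upper tail you need a weighted bound, $\E[(X_n)_k\mathbf{1}\{X_n>M_n\}]=o(\E[(X_n)_k])$, for $k=s\mu_n/\sigma_n$ with $s$ in a compact subinterval of $(c,c')$. This comes from a larger index in the window, not from a probability estimate. For $x>M_n$ one has $(x)_k\le (x)_{k'}/(M_n-k)_{k'-k}$. With $k'=\lfloor c'\mu_n/\sigma_n\rfloor$, the hypothesis turns this into a relative error $\exp(-(c'-s)\omega_n(1+o(1))+O(1))$.

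The lower tail needs no estimate at all. For $x\le\mu_n-\omega_n\sigma_n$ the whole sum $\sum_k\binom{x}{k}t^k=(1+t)^x$ is already smaller than the main term by $e^{-s\omega_n+O(1)}$.

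Your window computation then gives $\E[e^{\tilde sY_n}\mathbf{1}\{Y_n\le\omega_n\}]\to e^{\tilde s^2/2}$ with $Y_n=(X_n-\mu_n)/\sigma_n$. Your tilting argument applies verbatim to these truncated sub-probability laws: they converge vaguely to $\mathcal{N}(0,1)$, which has full mass, so $Y_n\to\mathcal{N}(0,1)$ in distribution.
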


Now, let us point out that the $k$-th factorial moment of $X_n$ equals
\[
    \mathbb{E}[(X_n)_k] = \sum_{\ell \ge k} \ell (\ell-1)\cdots (\ell-k+1) \cdot \frac{m_{n,\ell}}{m_n},
\]
where $m_n$ denotes the number of planar maps with $n$ edges, and $b_{n,\ell}$ the number of maps with $\ell$ occurrences of the pattern. In particular, the factor $\ell (\ell-1)\cdots (\ell-k+1)\,b_{n,\ell}$ counts planar maps with $\ell$ pattern occurrences, among which $k$ are labeled. Summing over all $\ell$ just gives the number of maps with $k$ labeled pattern occurrences (among arbitrary many). A key observation is the following lemma which holds for several map classes.

\begin{lemma}\label{lem:S1}
    Let $\mathcal{C}$ be a (sub)set of planar maps and \map{p} be a planar map. Let further $\mu_n$ the expected number of occurrences of \map{p} in a random map or size $n$ in $\mathcal{C}$, $\sigma_n$ be the standard deviation thereof and let $m^\circ_{n,k}$ be the number of all maps of size $n$ with $k$ labeled occurrences of \map{p} in $\mathcal{C}$ and let $m^{\circ,\times}_{n,k}$ be the number of all such maps where each labeled pattern occurrence intersects at most one other labeled occurrence of \map{p}. If \map{p} has a simple boundary, $\mu_n, \sigma_n^2 \in \Theta(n)$ and $k = \Theta(\sqrt{n})$ then, as $n\rightarrow \infty$, 
    \begin{equation}\label{eq:sandw}
        m_{n,k}^{\circ,\times} \leq m_{n,k}^\circ \leq  m_{n,k}^{\circ,\times} + \left(\frac{\mu_n}{2}\right)^k m_{n} + o\left(m_{n,k}\right). 
    \end{equation}
    If \map{p} does not have a simple boundary but the maximal vertex degree $\Delta_n$ in a random map with $n$ edges in $\mathcal{C}$ satisfies
    \[
        \PP\left(\Delta_n > \xi(n)\right) \leq e^{-\delta \xi_n}
    \]
    for $\xi_n = \omega(\log n)$, then \eqref{eq:sandw} holds as well.
\end{lemma}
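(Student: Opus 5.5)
The lower bound $m^{\circ,\times}_{n,k}\le m^\circ_{n,k}$ is immediate, since maps counted by $m^{\circ,\times}_{n,k}$ form a subclass of those counted by $m^\circ_{n,k}$. For the upper bound I would split the set of maps with $k$ labeled occurrences according to the \emph{intersection graph} $G$ on the $k$ labels, where two labels are adjacent if the corresponding occurrences share a vertex or an edge. Maps in which $G$ has maximum degree at most one are exactly those counted by $m^{\circ,\times}_{n,k}$. It then suffices to show that configurations in which $G$ has a vertex of degree at least two contribute at most $(\mu_n/2)^k m_n + o(m_{n,k})$.

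The plan is a first-moment/union-bound argument. Fix a connected component of $G$ on $r\ge 3$ labels; since some vertex of $G$ has degree at least $2$, there is at least one such component. Every occurrence has bounded size $|\map{p}|$, so once one occurrence in the component is placed, the others must be placed within bounded graph distance of it. In the simple-boundary case, an occurrence of $\map{p}$ is determined by a root (an oriented edge), and two overlapping occurrences force the second root into a neighbourhood of the first. Because the boundary is simple, the interior of an occurrence is a disk whose interior vertices have their full degree inside $\map{p}$, so these neighbourhoods have size $O(1)$ independently of the map. Therefore a component of size $r$ can be chosen in $O(n)\cdot C^{r}$ ways rather than the roughly $\mu_n^r\sim (c_1n)^r$ ways available to disjoint occurrences. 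Each additional overlap thus costs a factor $O(1/n)$ relative to the main term.

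Summing over the possible patterns of $G$ gives a bound of the form $\sum_j \binom{k}{\ldots}\mu_n^{k-j}(C/n)^{j}$ times the number of ways to realise the remaining structure. With $k=\Theta(\sqrt n)$, the number of ways to choose a triple of labels is $\binom{k}{3}=O(n^{3/2})$, while the triple costs $O(n^{-2})$ relative to $\mu_n^3$. Hence configurations with a degree-$\ge2$ vertex form a fraction $O(n^{-1/2})$ of $\mu_n^k m_n$-type quantities. The comparison with $m_{n,k}$ (which is of order $\mu_n^k\exp(\Theta(1))m_n$ in this range) then gives the $o(m_{n,k})$ term.

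The slightly odd term $(\mu_n/2)^k m_n$ I would use to absorb the very high-overlap regimes, where $G$ has many non-trivial components and the union bound above loses control of the combinatorial factor $k^{2j}$ against $n^{-j}$. Once $j\ge \varepsilon k$, I would bound crudely, noting that the number of distinct occurrence sets is at most $(\mu_n/2)^k m_n$ after accounting for the forced coincidences: each overlapping pair reduces the free choices by a factor of two asymptotically.

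For patterns without simple boundary, the bounded-neighbourhood claim fails, because a pinch vertex on the boundary of an occurrence can have unbounded degree, so a second occurrence overlapping there can be placed in $\deg(v)$ ways. Here I would condition on $\Delta_n\le\xi_n$ with $\xi_n=\omega(\log n)$ chosen, say, as $\log^2 n$. On this event the neighbourhood sizes are $O(\xi_n^{c})$, and the union bound above still gives a $n^{-1/2}\mathrm{polylog}(n)=o(1)$ loss. On the complement, the count is at most $n^{O(k)}m_n e^{-\delta\xi_n}$; with $k=\Theta(\sqrt n)$ this requires $\xi_n\gg \sqrt n\log n$, so in fact I would take $\xi_n$ of that order, which still keeps the conditioned polylog factors harmless: they become $n^{O(1)}$ only per overlap, so I would need to redo the bookkeeping.

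The main obstacle is this tradeoff: making the heavy-tail event negligible against factorial moments of order $\sqrt n$ while keeping the overlap cost $o(1)$. I expect to resolve it by bounding the overlap factor per pinch point by $\Delta_n/n$ and using $\sum\Delta$-type averaging rather than a worst-case bound.
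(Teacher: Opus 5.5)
Your per-map union bound is the same as the paper's. In a map $\map{m}$ with $p$ occurrences, the labelings in which some labeled occurrence intersects two others number at most $k^3c^2\,p\,(p-3)_{k-3}$. That is a fraction at most $c^2k^3/(p-2)^2$ of the $(p)_k$ labelings of $\map{m}$.

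\textbf{Main gap: you never control $p$.} This fraction is $O(n^{-1/2})$ only when $p=\Theta(n)$, and the bound is useless once $p=O(k^{3/2})$. Your comparison with ``$\mu_n^k m_n$-type quantities'' tacitly treats $\mu_n$ as the number of occurrences in every map. Made rigorous after summing over maps, it would need an a priori bound like $\sum_{\map{m}}(p)_{k-2}\le\mu_n^{k-2}e^{O(1)}m_n$. That is an upper bound on a factorial moment of order $\Theta(\sqrt n)$, the kind of bound the lemma is meant to produce; the trivial bound $p\le Cn$ loses a factor exponential in $\sqrt n$.

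The missing idea is the paper's split at $p=\mu_n/2$. A map with fewer than $\mu_n/2$ occurrences carries at most $(\mu_n/2)_k\le(\mu_n/2)^k$ labelings in total, and this is the sole origin of the term $(\mu_n/2)^km_n$. For the remaining maps the per-map fraction is $O(k^3/\mu_n^2)=O(n^{-1/2})$, which sums to $o(m^\circ_{n,k})$. Your reading of that term as absorbing high-overlap regimes, justified by ``each overlapping pair reduces the free choices by a factor of two'', is not an argument. That regime needs no separate treatment anyway: one union bound over the at most $k^3$ label triples $(a,\{b,c\})$ covers every configuration in which some vertex of $G$ has degree at least two. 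So no sum over overlap structures and no $k^{2j}n^{-j}$ series ever arise.

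\textbf{Two further points.} First, letting $G$ join occurrences that merely share a vertex is too generous. Even when $\map{p}$ has simple boundary, a boundary vertex $v$ of an occurrence can lie on $\Theta(\deg v)$ other occurrences (many $2$-gons glued at one vertex). So your $O(1)$-neighbourhood claim fails, and ``maximum degree at most one in $G$'' is strictly stronger than the condition defining $m^{\circ,\times}_{n,k}$. With the paper's notion of intersection (a common face, or interleaving at a pinch point), the number $c$ of occurrences meeting a given one is bounded by a constant depending only on $\map{p}$.

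Second, in the non-simple case you stop at the obstacle. The paper splits at $\Delta<\log^2 n$. On that event it uses $c\le\binom{\Delta}{2\ell}<\log(n)^{4\ell}$ to get a fraction $O(\log(n)^{8\ell}/\sqrt n)$, and it dismisses the complement via $|S_{2b}|\le e^{-\delta\log^2 n}m_n$. Your worry about this last step is legitimate and applies to the paper too. A bound on $|S_{2b}|$ alone does not bound $\sum_{\map{m}\in S_{2b}}(p)_k$, which can be as large as $(Cn)^k|S_{2b}|$. Measured against $m^\circ_{n,k}\ge m^{\circ,\times}_{n,k}\approx\mu_n^km_n$, the potential loss $(Cn/\mu_n)^k$ is exponential in $\sqrt n$ (rather than $n^{O(k)}$ as you wrote) and is not compensated by $e^{-\delta\log^2 n}$. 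However, your proposal does not supply the averaging argument you allude to either, so as written the non-simple case remains unproved.
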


The proof of this lemma can be found in Section~\ref{sec:proofs}. 

It is known that the maximal vertex degree in random planar maps has expectation of order $\log(n)$ and its distribution has exponential tails for $\Delta_n = \omega(\log n)$, see \cite{max_deg}. 

So using this lemma, it is enough to enumerate maps with labeled pattern occurrences, where each labeled pattern intersects at most one other labeled pattern occurrence. Since there are only a finite number of ways for two pattern occurrences to intersect, we can introduce an additional counting variable $x$ where $x$ marks distinguished labeled pattern occurrences which intersect at most pairwise.

In order to do this, we list all \emph{intersection types} of the pattern. These describe all submaps containing two distinguished pattern occurrences.

\begin{definition}[Intersection types, rotations and deep faces]    
   Two pattern occurrences are said to \emph{intersect} if either they contain a common face, or they share a pinch point on their boundary such that the cyclic order of the incident components at the pinch point cannot be partitioned into two contiguous blocks, each containing only components of one occurrence.

    More precisely, suppose that removing the pinch point decomposes the first occurrence into components $B_1,B_2,B_3$ and the second occurrence into components $B'_1,B'_2$. Let the cyclic order of these components around the pinch point be the radial order induced by the embedding. The two occurrences intersect if this cyclic order is not of the form
    \[
        B_i,B_j,B_k,B'_\ell,B'_m
    \]
    for any permutation $i,j,k \in \{1,2,3\}$ and $\ell,m \in \{1,2\}$, so the components of the 
    two occurrences cannot be separated into two consecutive runs.
   
    Let $\map{p}$ and $\map{r}$ be rooted planar maps and let $\map{r}$ contain two distinguished 
    pattern occurrences $p_1$ and $p_2$ of $\map{p}$ which intersect. Further, suppose that each edge and vertex of $\map{r}$ is an edge or vertex of $p_1$ or $p_2$. Then $\map{r}$ is called a \emph{rooted intersection type} of the pattern $\map{p}$.  
   
    \noindent    If  a rooted  map  $\map{n}$ can be obtained by rerooting $\map{r}$ such that the root face is preserved, then $\map{n}$ is called a {\em rotation} of $\map{r}$. 
        
    \noindent    An \emph{intersection type} $i$ of $\map{p}$ is the set of all rotations of a rooted intersection type $\map{r}$ of $\map{p}$. We further denote the number of pairwise non-isomorphic rotations of $\map{r}$ by $r_i$, where isomorphisms must preserve $\{p_1,p_2\}$. That is, the intersection type $i$ contains $r_i$ distinct rooted maps with an unordered pair of distinguished occurrences of $\map{p}$. 
    
    \noindent Given two intersecting occurrences $p_1$ and $p_2$  of the pattern $\map{p}$ in an arbitrary map $\map{m}$, we can obtain a rooted intersection type of $\map{p}$ by deleting all edges of $\map{m}$ except those in $\map{p}_1$ or  $\map{p}_2$, and assigning a root on the boundary of the (new) face that contains the root face of $\map{m}$. The intersection type does not depend on which root was chosen on the boundary. If it is type $i$, we say that {\em the intersecting pair of occurrences, $p_1$ and $p_2$,  has intersection type~$i$}.
    Note  that  a rooted intersection type can have interior faces that are not interior to either of the two distinguished occurrences of $\map{p}$. We call these {\em deep} faces.
\end{definition}
Given the intersection types of a pattern, we are able set up a functional equation counting maps with \emph{some} distinguished and labeled pattern occurrences which intersect at most pairwise. 

In particular, if the root edge is incident to an occurrence of a distinguished and labeled pattern occurrence or an intersecting pair of such pattern occurrences, we can isolate the submap by a simple boundary corresponding to the boundary of its intersection type. In case there are pinch points on the boundary, we simply split the pinch points into two separate vertices and obtain a simple boundary (see Figure~\ref{fig:eyes1} for an example). Then, we count the number of maps with so-called partial simple boundaries and subsequently attach a rotation of the intersection type along the boundary. Finally, we insert maps with simple boundaries in the deep faces which do not necessarily have simple boundaries. However, we can just glue vertices of the maps with simple boundary to match the exact shape of the deep face. 

 In Figure~\ref{fig:map_decom} the recursive decomposition is illustrated, where the sum runs over all intersection types $j$, $S_{i_j}$ denotes a map with simple boundary of length $i_j$ and $P_{k_j}$ a map with partial simple boundary of length $k_j$).

Maps with partial simple boundary have been introduced in ~\cite{Yu}, where they were treated in detail. We will define the once again but only discuss their generating functions for bipartite maps in the next section.

\begin{definition}[(Partial) simple boundaries]
    Let \map{m} be a map with root face valency $k > i$. If the first $i$ steps of the path along the boundary of \map{m} starting at its root vertex and in the direction determined by its root edge orientation consists of $i$ distinct edges and $i$ distinct vertices, we say the map has a \emph{partial simple boundary} of length $i$.\\
    If \map{m} has a partial simple boundary of length $i-1$ and root face valency $i$, then it has a \emph{simple boundary}.
\end{definition}

\begin{figure}
    \centering
    \includegraphics[width=0.85\linewidth]{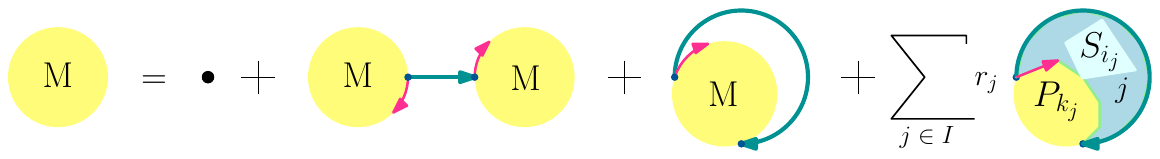}
    \caption{Decomposition of planar maps \emph{M} 
    with labeled patterns intersecting at most pairwise}
    \label{fig:map_decom}
\end{figure}

\begin{proposition}
    Let $\map{p}$ be a planar map with $e$ edges, a boundary of length $v$ and $r$ rotational symmetries. Further, we enumerate its intersection types by $1$ to $I$, and let $e_i$ be the number of edges, $v_i$ the root face valency, $d_{i,j}$ the number of deep $j$-faces and $r_i$ the number of rotations of intersection type $i \in [I]$. Then the generating function $M(z,u,x)$ for planar maps with some distinguished labelled pattern occurrences of~$\map{p}$, intersecting at most pairwise satisfies the equation
    \begin{align}
        M(z,u,x) &= 1+zuM(z,u,x)^2 + zu\frac{M(z,1,x)-uM(z,u,x)}{1-u} + x\frac{rz^{e-v+1}}{u^{v-2}}P^\mathcal{B}_{v-1}(z,u,x)\nonumber \\
        &\quad+ x^2\sum_{i=1}^I \frac{r_iz^{e_i-v_i+1}}{u^{v_i-2}}P_{v_i-1}(z,u,x)\prod_{j\geq 1}\left([u^j]P_{j-1}(z,u,x)\right)^{d_{i,j}}, \label{eq:bipartite}
    \end{align}
    where $z$ counts the number of edges, $u$ the root face valency, $x$ the number of distinguished, labeled occurrences of $\map{p}$ and $P_i(z,u,x)$ is the generating function of maps with partial simple boundary of length $i$.
\end{proposition}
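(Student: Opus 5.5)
We prove the equation by a Tutte-style root-edge decomposition of the class $\mathcal{M}^{\circ,\times}$: planar maps carrying a (possibly empty) set of distinguished, labelled occurrences of $\map{p}$, any two of which either are disjoint (in the sense of the intersection definition) or intersect, with no occurrence intersecting more than one other. We split according to what the root edge is incident to; the cases are disjoint and exhaust all nonempty maps.

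\textbf{First case.} Neither the root edge nor its incident inner face belongs to a distinguished occurrence. Here we use Tutte's classical decomposition: delete the root edge.
\begin{enumerate}
\item[(i)] If it is a bridge, the map splits into two maps of the class. This gives $zu^2M(z,u,x)^2$; the written $zuM^2$ is presumably a typo or a convention shift in $u$, and this needs checking.
\item[(ii)] If it is not a bridge, deleting it merges the root face with the adjacent face. Re-inserting the edge in all admissible positions yields the divided difference $zu\frac{M(z,1,x)-uM(z,u,x)}{1-u}$.
\end{enumerate}
The main point is that deleting the root edge neither destroys nor creates distinguished occurrences. The root edge belongs to no marked occurrence, and the markings live on interior faces and edges, which are preserved. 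Pairwise intersection relations are also unchanged, since they depend only on the occurrences themselves.

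\textbf{Second case.} The face incident to the root edge lies in a distinguished occurrence $p_1$ that intersects no other distinguished occurrence.
\begin{enumerate}
\item[(i)] Cut out $p_1$ along its boundary. If the boundary has pinch points, first split them, as described before Figure~\ref{fig:map_decom}, to obtain a simple boundary walk of length $v$.
\item[(ii)] The remainder is a map whose root face contains a distinguished simple path of length $v-1$ (the boundary of $p_1$ minus one edge, which is re-identified). This is exactly a map with partial simple boundary of length $v-1$, counted by $P^{\mathcal B}_{v-1}$.
\item[(iii)] Weight the result. The factor $x$ accounts for the new marked occurrence. The $r$ rotations correspond to the choices of where the root sits on $\p$. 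The $e-v+1$ edges of $\p$ not identified with boundary edges contribute $z^{e-v+1}$.
\item[(iv)] Correct the exponent of $u$: dividing by $u^{v-2}$ removes the $v-1$ boundary edges, which are no longer on the outer face, while keeping the one edge contributed to the new root face.
\end{enumerate}
Splitting pinch points is invertible, because the boundary of $\map{p}$ is fixed and tells us which vertices to glue back.

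\textbf{Third case.} The root edge lies in an intersecting pair $\{p_1,p_2\}$. We take the union, which is a rooted intersection type $i$, and argue as in the second case.
\begin{enumerate}
\item[(i)] The outer part gives $x^2 r_i z^{e_i-v_i+1}u^{-(v_i-2)}P_{v_i-1}$.
\item[(ii)] Each deep $j$-face of the type is filled independently by a map with simple boundary of length $j$, namely $[u^j]P_{j-1}$, glued along the face boundary.
\item[(iii)] No distinguished occurrence lying inside a deep face or outside can meet $p_1$ or $p_2$, because each of them already intersects its one allowed partner. So the inserted pieces are arbitrary members of the class, which justifies the product over deep faces.
\end{enumerate}

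\textbf{Main obstacle.} The hardest step is bijectivity in the second and third cases, in two respects.
\begin{enumerate}
\item[(i)] Uniqueness of the decomposition. The occurrence or pair containing the root face must be well defined. This holds because at most pairwise intersection gives a unique maximal cluster.
\item[(ii)] Faithful gluing. The pieces must never accidentally create an intersection, or merge vertices in a way that changes the type. This is why the pieces are required to have a (partial) simple boundary: gluing along a simple path reproduces exactly the boundary of the type. We would verify this by constructing the inverse map explicitly and checking that rotations are counted once modulo the symmetries preserving $\{p_1,p_2\}$.
\end{enumerate}
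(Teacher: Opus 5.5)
Your decomposition is the paper's own. You use Tutte's root-edge deletion when the root edge lies in no distinguished occurrence. When it does, you cut out the isolated occurrence to leave a map with partial simple boundary of length $v-1$, weighted by $rz^{e-v+1}u^{-(v-2)}$. Intersection types are handled the same way, with deep $j$-faces filled by $[u^j]P_{j-1}$. You are right about the bridge term: for general maps it must be $zu^2M(z,u,x)^2$. The printed $zuM^2$, like the superscript $\mathcal{B}$ on $P_{v-1}$, is carried over from the bipartite equation, where $u$ marks half the valency.

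The step that does not hold is your claim that splitting pinch points is invertible because the boundary of $\map{p}$ tells you which vertices to glue back. The paper's proof makes the same claim in essentially the same words. This only makes the cutting map injective. Re-gluing identifies distinct boundary vertices of the remainder $X$, and that identification is not admissible for every $X$ counted by $P_{v-1}$.

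Take the fly. The two copies $w',w''$ of the pinch vertex are the second and fourth vertices of the boundary path of $X$. Now $X$ may carry a distinguished fly one of whose $2$-gons joins $w'$ and $w''$. After identification this $2$-gon becomes two loops, so the configuration is counted on the right-hand side but corresponds to no admissible map. Hence the equation overcounts. Deep faces with non-simple boundary have the same defect. So for patterns with pinch points, $P_{v-1}$ and $[u^j]P_{j-1}$ would have to be replaced by generating functions of remainders in which no distinguished occurrence contains two vertices that get identified.

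For patterns with simple boundary the problem disappears. Occurrences can then only intersect through a common face. If the outer face or a deep face of the union of two such occurrences visited a vertex $w$ twice, $w$ would be a cut vertex of the union, hence a pinch point on the boundary of one of them. So no vertices are identified, and your bijection goes through in that case.
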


\begin{proof}
    We use the classic decomposition scheme by Tutte from the 1960s \cite{tutte_1963} which is based on deleting the root edge. The root edge of the resulting map(s) is defined so as to preserve the root face and root vertex (see Figure \ref{fig:map_decom}). 
    In case the map disconnects, both components are assigned root edges, where in one component the root edge preserves the root vertex and the root face and in the other component the root vertex is the vertex where the deleted root edge pointed to. See Figure~\ref{fig:map_decom} for the decomposition scheme that we analyse. 

    The fourth term corresponds to the case where the root edge is incident to a labeled pattern occurrence which does not intersect with any other labeled pattern occurrence. In this case, we count the number of maps with partial simple boundary of length $v-1$ with $n-e+v-1$ edges and multiply it by $r$, the number of rotational symmetries of $\map{p}$. Gluing the map with partial simple boundary to one of these $r$ rotational symmetries of the pattern occurrence will decrease the root face valency of the map by $v-2$ and increase the number of edges by $e-v+1$. Note that the pattern occurrence does not necessarily have a simple boundary, but the gluing procedure is still bijective, since one can separate a pinch point that is incident to $j$ corners of the root face of $\map{p}$ into $j$ vertices that preserve the root face (see Figure~\ref{fig:eyes}), subsequently delete the interior edges of $\map{p}$ and finally the root edge and obtain a map with partial simple boundary.

    The same argument holds for the terms in the sum over all intersection types, where we additionally fill deep faces with maps with simple boundary and corresponding root face valency. These are enumerated by $[u^j]P_{j-1}(z,u,x), j\geq 0$ since a map with partial simple boundary of length $j-1$ and root face valency $j$ is indeed a map with simple boundary of length $j$. Again, one can argue that this procedure is also bijective for deep faces which do not have a simple boundary analogous to above.
\end{proof}

Once the functional equation for $M(z,u,x)$ is established, we aim to apply the following theorem providing universal asymptotic behavior of the coefficients in $z$. 

\begin{theorem}[Drmota, Noy, Yu~\cite{DrmotaNoyYu}]\label{thm:dny}
    Let $F(z,u,x)$ be the solution to the equation
    \[
        F(z,u,x) = Q\left(z,u,x,F(z,u,x),\frac{F(z,u,x)-F(z,1,x)}{u}\right),
    \]
    where $Q(z,u,x,y_0,y_1)$ is an analytic function around the origin and at the boundaries of convergence of $F(z,u,x), F(z,1,x)$ and let $Q(z,u,0,y_0,y_1)$ have non-negative Taylor coefficients. Let further $\rho(0)^{-1}>0$ be the only singularity at the radius of convergence of $F(z,1,0)$. Then
    \[
        [z^n]F(z,0,x) = c(x)n^{-\frac{5}{2}}\rho(x)^n\left(1+O\left(\frac{1}{n}\right)\right),
    \]
    uniformly for $x$ in a compact neighborhood of $x=0$, where $c(x)$ and $\rho(x)$ are analytic functions at $x=0$.
\end{theorem}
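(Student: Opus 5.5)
The statement just given is Theorem~\ref{thm:dny}, a universal asymptotic law for solutions of catalytic equations with one catalytic variable. I would follow the kernel method in its Bousquet-M\'elou--Jehanne form, extended by the parameter $x$, and then transfer singularities.

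\textbf{Algebraic system.} Write $f(z,x)=F(z,1,x)$ and regard the equation as $F=Q(z,u,x,F,(F-f)/u)$. Implicit differentiation in $F$ shows that any power series $u=U(z,x)$ with
\[
1=Q_{y_0}+\tfrac{1}{u}Q_{y_1}
\]
(evaluated along the solution) must also satisfy the $u$-derivative equation $0=Q_u-\tfrac{1}{u^2}(F-f)Q_{y_1}$. Together with the original equation this gives a system of three analytic equations in the three unknowns $(U,\,F(z,U,x),\,f)$. The non-negativity of the coefficients of $Q(z,u,0,\cdot,\cdot)$ guarantees, at $x=0$, that the system has a unique power series solution with non-negative coefficients. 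Hence $f(z,0)$ has a positive dominant singularity, namely the $\rho(0)^{-1}$ assumed in the statement.

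\textbf{Singularity type.} At the dominant singularity the Jacobian of the system becomes singular. The delicate point is that the generic case yields a square-root singularity, $n^{-3/2}$, whereas maps give $(1-\rho z)^{3/2}$. To get the latter, I would use the Drmota--Noy--Yu argument: the Jacobian determinant vanishes to first order, and the extra linear relation coming from the catalytic structure cancels the square-root term. One then shows that $f(z,x)$ has a Puiseux expansion
\[
f=a(x)+b(x)(1-z\rho(x))+c(x)(1-z\rho(x))^{3/2}+\cdots,
\]
with all coefficients analytic in $x$. The same expansion holds for $F(z,u,x)$ at fixed $u$, since $F(z,u,x)$ is an analytic function of $f$ and $z$ through the original equation once $|u|$ is small. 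For $u=0$ this is immediate because the quotient $(F-f)/u$ has a removable singularity at $u=0$.

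\textbf{Perturbation in $x$.} The analyticity of $Q$ at the boundary points lets the implicit function theorem carry the singular point $\rho(x)^{-1}$ and all expansion coefficients analytically into a neighbourhood of $x=0$. Uniqueness of the dominant singularity at $x=0$ persists for $x$ close to $0$ by continuity, and gives $\Delta$-analyticity of $F(z,0,x)$ uniformly in $x$. Flajolet--Odlyzko singularity analysis, applied uniformly in $x$, then yields
\[
[z^n]F(z,0,x)=c(x)\,n^{-5/2}\rho(x)^n\left(1+O(1/n)\right).
\]

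\textbf{Main obstacle.} The hardest step is the singularity type: showing that the Jacobian degenerates exactly to first order, that no square-root term survives, and that $c(0)\neq0$. I would first try to deduce this from the positivity of $Q$ at $x=0$, which fixes the local shape of the curve defined by the system near its dominant point, together with a second-order nondegeneracy check on $Q$. If this check cannot be verified generally, it would have to be assumed as a generic condition.
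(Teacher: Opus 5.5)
\textbf{There is a genuine gap: the $3/2$ exponent, which is the content of the theorem, is asserted rather than proved.}

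The paper does not prove this statement; it quotes it from Drmota--Noy--Yu. Your outline (Bousquet-M\'elou--Jehanne system, square-root singular auxiliary series, cancellation giving $(1-\rho z)^{3/2}$, perturbation in $x$, uniform transfer) follows the general architecture of their argument. But the step you set aside as the ``main obstacle'' is that exponent, and the route you suggest for it cannot succeed. Positivity of $Q(z,u,0,\cdot,\cdot)$ plus a vanishing Jacobian does not force the exponent $3/2$. Take $Q=1+z(1+u)y_0+zy_1$, with the divided difference taken at $u=0$. Every stated hypothesis holds, $F(z,0)$ is the Motzkin series with unique dominant singularity $1/3$, and $[z^n]F(z,0)\sim c\,3^n n^{-3/2}$. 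Drmota--Noy--Yu explicitly separate linear catalytic equations ($n^{-3/2}$) from non-linear ones ($n^{-5/2}$). So a non-linearity hypothesis on $Q$ in $(y_0,y_1)$ is indispensable; the statement as quoted omits it. Your proof must use that hypothesis at an identifiable step, not appeal to ``genericity''.

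That step is the structural reason for the cancellation, which your sketch only names. Put $G(y_0,m,z,u,x)=Q\bigl(z,u,x,y_0,\frac{y_0-m}{u}\bigr)-y_0$ and solve $G=G_{y_0}=0$ locally for $(y_0,m)=(Y,X)(z,u,x)$. At points with $G_{y_0}=0$ the relevant Jacobian is $-G_mG_{y_0y_0}$, and this is where non-linearity enters. For $Q$ affine in $(y_0,y_1)$, $G_{y_0y_0}\equiv 0$ and you are back in the plain kernel method, where the target series inherits the square root of the kernel root. Differentiating $G(Y,X,z,u,x)=0$ in $u$ and using $G_{y_0}=0$ gives $G_mX_u+G_u=0$. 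Hence the third equation of your system is equivalent to $X_u=0$, and your series $f$ is a \emph{critical value}: $f=X(z,U,x)$ with $X_u(z,U,x)=0$. Suppose the singularity comes from degeneration of this critical point, i.e.\ $X_{uu}=0$ at $(\rho^{-1},\tau)$, with $X_{uz}X_{uuu}\neq 0$ there. Then $U-\tau\sim\pm C\sqrt{1-\rho z}$, and expanding $X$ around $u=\tau$ while using $X_u(z,U)=0$ removes the square-root term. What remains is a nonzero multiple of $(1-\rho z)^{3/2}$, just as the critical value of $\varepsilon t+t^3$ is of order $\varepsilon^{3/2}$. The real work, where positivity and non-linearity are used, is to show two things. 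First, the dominant singularity is of this kind and not a breakdown of the implicit function theorem for $(Y,X)$. Second, $X_{uz}X_{uuu}\neq 0$ at that point.

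There are also two smaller errors. With denominator $u$, the quotient $(F-f)/u$ is removable at $u=0$ only if $f=F(z,0,x)$. So your passage to $u=0$ is either vacuous or, in the $u=1$ convention, needs an argument like Lemma~\ref{lem:mov_bi}: analyticity of $F(z,0,x)$ in $(z,x,F(z,1,x))$ at the singular point, with nonzero derivative in the last argument. Finally, persistence of a unique dominant singularity for small $x$ needs a compactness argument along the whole circle $|z|=\rho(0)^{-1}$, not only continuity at $\rho(0)^{-1}$.
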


Clearly, to apply Theorem~\ref{thm:dny}, we need to check whether the generating functions describing maps with marked pattern occurrences incident to the root edge are indeed analytic functions in $z,u,x,M(z,u,x)$ and $M(z,1,x)$ and subsequently do a variable change $v=u-1$. Indeed, this was already proven for face counts in maps in~\cite{DrPa} and further used in~\cite{Yu} and~\cite{patternocc}. 

\begin{lemma}\label{lem:mov_bi}
	The generating functions $m_i(z,x):= [u^i]M(z,u,x)$ of planar maps with root face valency~$i$ are analytic functions in $z,x$ and $M(z,1,x)$ if $|z|\leq \frac{13}{25}$, $|M(z,1,x)-1| \leq \frac{5}{4}$ and $|x| \leq x_0$ for some $x_0>0$, small enough.
\end{lemma}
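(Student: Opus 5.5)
The idea is to extract $m_i(z,x)=[u^i]M(z,u,x)$ recursively from the functional equation~\eqref{eq:bipartite}, by comparing coefficients of $u^i$, and to show that the recursion stays analytic in the region $|z|\le \frac{13}{25}$, $|M(z,1,x)-1|\le\frac54$, $|x|\le x_0$. This is the argument of Drmota and Panagiotou~\cite{DrPa} for face counts, adapted here.

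First consider $x=0$. The equation reduces to Tutte's equation, and $[u^i]$ of it reads
\[
m_i = \delta_{i,0} + z\sum_{a+b=i-1} m_a m_b + z\Bigl(M(z,1,0)-\sum_{j<i-1} m_j\Bigr).
\]
Since $m_0=1$, each $m_i$ is a polynomial in $z$ and $M_1:=M(z,1,x)$. In~\cite{DrPa} it is shown that $\sum_i |m_i|\,R^i$ converges for some $R>1$, uniformly on the stated polydisc. The constants $\frac{13}{25}$ and $\frac54$ are chosen to make this work: one uses a majorant series $\overline m(t)$ satisfying a quadratic equation whose discriminant stays positive on the region. So $M(z,u,0)$ is analytic in $u$ for $|u|<R$ as a function of $(z,M_1)$, and each $m_i$ is analytic.

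Now let $x$ vary. Multiplying by $u^{v-2}$ and $u^{v_i-2}$ shows that the pattern terms contribute only to coefficients of $u^i$ through finitely many shifts. The partial-simple-boundary functions $P_j(z,u,x)$ are expressible from $M$ by the standard inclusion–exclusion or substitution of Yu~\cite{Yu}. Each $P_j$ is obtained from $M(z,u,x)$ and finitely many $m_\ell$ by finitely many algebraic operations: products, and division by $m_0$ or by $1-z(\cdots)$ factors that are bounded away from zero for small $|z|$. Hence $[u^i]P_j$ is again a polynomial or analytic expression in $z$, $M_1$ and the $m_\ell$.

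Taking $[u^i]$ of~\eqref{eq:bipartite} therefore gives
\[
m_i(z,x)=\Phi_i\bigl(z,x,M_1,m_0,\dots,m_{i-1},m_i,\dots,m_{i+L}\bigr),
\]
where $L$ is bounded by $\max(v,v_i)$ and $x$ enters only polynomially with small coefficients. The terms involving $m_{i+\ell}$ with $\ell\ge0$ all carry a factor $x$, because the $u^{-(v-2)}$ shifts only come with pattern terms. This lets us treat the equation as a perturbation of the $x=0$ system in a weighted sequence space: set $\|(m_i)\| = \sum_i |m_i| R'^i$ with $1<R'<R$. The map $(m_i)\mapsto(\Phi_i)$ is a contraction for $|x|\le x_0$ small, since the $x$-dependent part has Lipschitz constant $O(|x|)\cdot R'^{L}$. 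The Banach fixed point theorem, in its analytic-parameter version (or the implicit function theorem in the Banach space), then gives a unique solution depending analytically on $(z,x,M_1)$. In particular every $m_i(z,x)$ is analytic, and the solution coincides with the formal power series solution by uniqueness.

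The main obstacle is the uniform bound for $x=0$ on the full region, namely the convergence of $\sum|m_i|R^i$ with $R>1$. For this I would invoke the estimates of~\cite{DrPa} directly rather than recompute them, and only check that the additional pattern terms with their negative powers of $u$ are controlled by the factor $x_0$ together with finite-order shifts. A secondary point is that the deep-face factors $[u^j]P_{j-1}$ are themselves analytic in the same variables. These factors are finitely many, one for each deep face of each of the finitely many intersection types, so this follows from the same recursion.
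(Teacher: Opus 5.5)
Your overall plan is the paper's plan: take $[u^i]$, treat $w=M(z,1,x)$ as a free parameter, and absorb the non-triangular pattern terms through the smallness of $x$ in a Banach fixed-point argument. The gap is in the space you run it in, and in the perturbation step.

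\textbf{The space with weight $R'\ge1$ does not work.} With $w$ free, the $i$-th component of your map contains $z\bigl(w-\sum_{j<i-1}y_j\bigr)$. For summable $\mathbf y$ this tends to $z\bigl(w-\sum_j y_j\bigr)$ as $i\to\infty$. So $\Phi$ does not map $\{\mathbf y:\sum_i|y_i|R'^i<\infty\}$ into itself for any $R'\ge1$, because the partial-sum operator is unbounded there, and the contraction cannot even be set up.

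\textbf{The $x=0$ input you attribute to Drmota and Panagiotou is false for free $w$.} The series $\sum_i m_i(z,w)u^i$ is the branch, regular at $u=0$, of the quadratic obtained by multiplying the equation by $1-u$. For small $z$, its discriminant has two simple zeros at $u=1+z+O(|z|^{3/2})$, which merge only in the kernel-method case $w=M(z,1,0)$. Hence the radius in $u$ is at most about $|1+z|$, and no uniform $R>1$ exists on the polydisc. For small real $z<0$ the $x=0$ coefficients even grow geometrically: the feedback $z\bigl(w-\sum_{j<i-1}m_j\bigr)$ then pushes the partial sums away from $w$, so the sequence is not in $\ell^1$ at all. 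The paper's unweighted $\ell^1$ bound $|z||Y|+|z||\mathbf y|_1$ for this term glosses over the same issue.

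\textbf{The perturbation step is incomplete.} An $O(|x|)$-Lipschitz perturbation keeps a map contracting only if the $x=0$ map is already a contraction on an invariant ball. You never show this; the paper at least estimates ball invariance and the Lipschitz constant of the full operator directly. A bound on the $x=0$ solution does not supply it, and neither does the implicit function theorem without invertibility of $I-D\Phi_0$.

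\textbf{The repair} is to weight towards $u=0$ rather than beyond $u=1$. Use $\|\mathbf y\|_r=\sum_i|y_i|r^i$ with $r<1$ small. Then:
the partial-sum operator $\mathbf y\mapsto(\sum_{j<i-1}y_j)_i$ has norm at most $\frac{r^2}{1-r}$;
the quadratic term has norm at most $|z|r\|\mathbf y\|_r^2$;
the constant term contributes $|z||w|\frac{r}{1-r}$.
So for $r$ small the $x=0$ map sends $\{\|\mathbf y\|_r\le2\}$ into itself and is a contraction, uniformly for $|z|\le\frac{13}{25}$ and $|w-1|\le\frac54$; the specific constants play no role. The pattern terms shift indices by at most $L$ and involve finitely many $m_j$. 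Their Lipschitz constant on that ball is therefore $C(r)|x|$ with $C(r)$ finite; it contains negative powers of $r$, not $R'^{L}$. Choosing $x_0$ after $r$ gives a contraction depending analytically on $(z,x,w)$. For small $|z|,|x|$ the true coefficients with $w=M(z,1,x)$ lie in this ball. Uniqueness of the fixed point then yields $m_i(z,x)=\tilde m_i(z,x,M(z,1,x))$.
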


The proof of this statement is based on a fixed point argument on the space of $\ell^1(\C)$ sequences which we pick up in the proof of analogous statements for other map classes in the next section.

Now, we can use the following lemma to compute the higher moments of our random variable. 

\begin{lemma}[Bivariate asymptotics from catalytic equations]\label{lem:bivar}
    Let the coefficients of $F(z,0,x)$ in $z$ satisfy uniformly for $x$ in a compact neighborhood around $x=0$ the asymptotic formula
    \[
        [z^n]F(z,0,x) = c(x)n^{-\frac{5}{2}}\rho(x)^n\left(1+O\left(\frac{1}{n}\right)\right)
    \]
    where $c(x)$ and $\rho(x)$ are analytic functions at $x=0$. Then for, $k = \Theta(\sqrt{n})$,
    \[
        k![z^nx^k]F(z,0,x) \sim c_0n^{k-\frac{5}{2}}\rho(0)^n\left(\frac{\rho'(0)}{\rho(0)}\right)^ke^{\frac{k^2}{2n}\left(\frac{\rho''(0)\rho(0)}{\rho'(0)^2}-1\right)},
    \]
    as $n\rightarrow \infty$.
\end{lemma}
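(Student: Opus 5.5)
The plan is to extract the coefficient of $x^k$ by a Cauchy integral over a circle of radius $x_0\to 0$ and evaluate it by a saddle-point argument. The input is the uniform expansion $[z^n]F(z,0,x)=c(x)n^{-5/2}\rho(x)^n(1+O(1/n))$. We write
\[
[z^nx^k]F(z,0,x)=\frac{1}{2\pi i}\oint_{|x|=x_0} c(x)n^{-\frac52}\rho(x)^n\Bigl(1+O\bigl(\tfrac1n\bigr)\Bigr)\frac{dx}{x^{k+1}} .
\]
Since $k=\Theta(\sqrt n)$, the saddle point will lie at $x_0=\Theta(k/n)=\Theta(n^{-1/2})$. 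This is inside the compact neighbourhood where the expansion holds, so the $O(1/n)$ error is uniform on the whole contour. Throughout we assume $\rho'(0)\neq 0$; otherwise the right-hand side of the statement is meaningless. In the application $\rho'(0)>0$, since $\rho(x)$ is the exponential growth rate and $x$ marks occurrences.

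\textbf{Saddle point.} Set $a=\rho'(0)/\rho(0)$ and $\beta=\rho''(0)/\rho(0)-a^2$, so that $\log\rho(x)=\log\rho(0)+ax+\beta x^2/2+O(x^3)$. Let $h(x)=n\log\rho(x)-k\log x$. The saddle equation $h'(x_0)=0$ reads $n(a+\beta x_0+O(x_0^2))=k/x_0$. Writing $t=k/(na)$, it is solved by $x_0=t(1+\varepsilon)$ with $\varepsilon=O(t)=O(n^{-1/2})$.

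We then evaluate $h(x_0)$, keeping careful track of signs. We have $nax_0=k(1+\varepsilon)$ and $-k\log x_0=-k\log t-k\varepsilon+O(k\varepsilon^2)$, so the linear terms in $\varepsilon$ cancel. What remains is
\[
h(x_0)=n\log\rho(0)+k-k\log t+\frac{n\beta t^2}{2}+o(1).
\]
The discarded terms are $n x_0^3$, $k\varepsilon^2$ and $n\beta t\cdot t\varepsilon$, all of order $n^{-1/2}$. Since $n\beta t^2/2=\frac{k^2}{2n}\cdot\frac{\beta}{a^2}$ and $\beta/a^2=\frac{\rho''(0)\rho(0)}{\rho'(0)^2}-1$, this gives
\[
e^{h(x_0)}\sim \rho(0)^n e^k\Bigl(\frac{na}{k}\Bigr)^k \exp\Bigl(\frac{k^2}{2n}\Bigl(\frac{\rho''(0)\rho(0)}{\rho'(0)^2}-1\Bigr)\Bigr),
\]
with exactly the sign appearing in the statement.

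\textbf{Gaussian integral and assembly.} Parametrise $x=x_0e^{i\theta}$. Then $h(x_0e^{i\theta})=h(x_0)-\tfrac12 x_0^2h''(x_0)\theta^2+O(k|\theta|^3)$, where $x_0^2h''(x_0)=k(1+O(n^{-1/2}))$. On the central range $|\theta|\le k^{-1/2}\log k$ the cubic remainder is $o(1)$, so the central part contributes $\frac{1}{2\pi}\sqrt{2\pi/k}\,c(0)(1+o(1))$ times $e^{h(x_0)}n^{-5/2}$; here we use $c(x)\to c(0)$ uniformly on the shrinking circle. Multiplying by $k!$ and applying Stirling, $k!\,e^kk^{-k}(2\pi k)^{-1/2}\to1$. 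The factor $(na/k)^k$ becomes $n^k(\rho'(0)/\rho(0))^k$, which yields the claim with $c_0=c(0)$.

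\textbf{Tails (main obstacle).} The step needing real care is showing that the arc $|\theta|>k^{-1/2}\log k$ is negligible. Because the radius $x_0$ tends to $0$, the analyticity of $\rho$ alone controls $|\rho(x)|$ on the circle, with no global positivity assumption needed. From $\rho(x_0e^{i\theta})=\rho(0)\bigl(1+ax_0e^{i\theta}+O(x_0^2)\bigr)$ we get
\[
|\rho(x_0e^{i\theta})|^n\le \rho(x_0)^n\exp\bigl(-nax_0(1-\cos\theta)+O(nx_0^2)\bigr).
\]
Here $nx_0^2=O(1)$ is harmless, and $nax_0(1-\cos\theta)\ge c\,k\theta^2$. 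The tail integral is therefore $O(e^{-c\log^2k})$ relative to the main term. The $O(1/n)$ error and the variation of $c(x)$ are uniform on the circle, so they contribute only a factor $1+o(1)$ on the central arc and are dominated by the same Gaussian bound on the tails. This completes the plan.
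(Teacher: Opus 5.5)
Your proof is correct. It follows the same overall strategy as the paper: a Cauchy integral in $x$ over a circle of radius about $k/(n f'(0))$ with $f=\log\rho$, and a second-order Taylor expansion of $\log\rho$. The difference lies in how the resulting integral is evaluated.

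The paper does not locate the exact saddle point. It takes the radius to be exactly $r=k/(nf'(0))$ (your $t$), so the integrand becomes $e^{k(e^{i\varphi}-i\varphi)+\frac{k^2}{2n}\frac{f''(0)}{f'(0)^2}e^{2i\varphi}+O(k/n)}$. It pulls out the constant $e^{k+\frac{k^2}{2n}f''(0)/f'(0)^2}$ and then evaluates $\int_{-\pi}^{\pi}e^{k(e^{i\varphi}-1-i\varphi)}\,d\varphi=2\pi k^k/(e^k k!)$ \emph{exactly}, by reading it as the coefficient extraction $[z^k]e^{kz}$. The factor $k!$ in the statement therefore cancels identically, with no Stirling formula and no central/tail split.

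The leftover factor $e^{\frac{k^2}{2n}\frac{f''(0)}{f'(0)^2}(e^{2i\varphi}-1)+O(k/n)}$ stays bounded precisely because $k^2/n=O(1)$. The paper controls it on the whole circle with the single Laplace bound $\Re\,k(e^{i\varphi}-1-i\varphi)\le -c_1k\varphi^2$, which gives relative error $O(n^{-1/4})$.

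In your version, the factor $e^{\frac{k^2}{2n}(\cdots)}$ instead comes out of $h(x_0)$, through the cancellation of the terms linear in $\varepsilon$. The price is the Gaussian approximation, the explicit tail estimate, and Stirling. The two evaluations agree at this precision, since $h(t)-h(x_0)\approx \tfrac12 k\varepsilon^2=O(n^{-1/2})$.

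The paper's route is shorter because the main integral is an exact identity. Yours is the standard, more robust saddle-point computation. It would also adapt more readily to ranges of $k$ where $k^2/n$ is unbounded, where the paper's perturbative treatment of the $e^{2i\varphi}$ term breaks down.
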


The proof of this lemma is based on the results in~\cite{DrmotaNoyYu} and computations using the saddle point method. It can be found in Section~\ref{sec:proofs}.

The moments of the number of pattern occurrences in random maps as required by Theorem~\ref{thm:gao} are then given by Lemma~\ref{lem:S1} and simple calculations based on Lemma~\ref{lem:bivar} which we omit to save space leading to the following expressions.

\begin{lemma}\label{lem:moments}
    Let $\map{p}$ be a planar map and $X_n$ the number of pattern occurrences of $\map{p}$ in a random map of size $n$. Then,
    \begin{align*}
        \E(X_n)&\sim \frac{\rho'(0)}{\rho(0)}n\\
        \V(X_n)&\sim \frac{\rho''(0)\rho(0)+\rho'(0)\rho(0)-\rho'(0)^2}{\rho(0)^2}n\\
        \E((X_n)_k)&\sim \left(\frac{\rho'(0)}{\rho(0)}n\right)^{k}e^{\frac{k^2}{2n}\left(\frac{\rho''(0)\rho(0)}{\rho'(0)^2}-1\right)}, &k = \Theta(\sqrt{n}),
    \end{align*}
    as $n \rightarrow \infty$.
\end{lemma}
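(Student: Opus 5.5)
The plan is to combine the functional equation \eqref{eq:bipartite}, Theorem~\ref{thm:dny}, Lemma~\ref{lem:bivar} and Lemma~\ref{lem:S1}. The generating function $M(z,1,x)$ counts maps with some labeled occurrences that intersect at most pairwise. By Lemma~\ref{lem:mov_bi}, after the substitution $v=u-1$, \eqref{eq:bipartite} has the form required by Theorem~\ref{thm:dny}. Hence
\[
[z^n]M(z,1,x) = c(x)n^{-5/2}\rho(x)^n\left(1+O(1/n)\right)
\]
uniformly near $x=0$, where $\rho(0)$ is the exponential growth rate of $m_n$. The coefficient $k![z^nx^k]M(z,1,x)$ equals $m^{\circ,\times}_{n,k}$: the factor $k!$ accounts for the labels, and the $x^2$ terms make pairwise intersections count correctly.

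\textbf{Mean and variance.} For fixed $k=1,2$, occurrences of $\map{p}$ are either disjoint or intersect pairwise, so $m^{\circ}_{n,k}=m^{\circ,\times}_{n,k}$ exactly. We therefore have
\[
\E(X_n)=\frac{[z^n]\partial_x M(z,1,x)|_{x=0}}{[z^n]M(z,1,0)}, \qquad \E((X_n)_2)=\frac{[z^n]\partial_x^2 M(z,1,x)|_{x=0}}{[z^n]M(z,1,0)}.
\]
Since the asymptotic expansion is uniform in $x$ and analytic, we may differentiate it under Cauchy's formula in $x$ on a small circle. This gives
\[
[z^n]\partial_x M=\left(c'(0)+c(0)n\tfrac{\rho'(0)}{\rho(0)}\right)n^{-5/2}\rho(0)^n\left(1+O(1/n)\right),
\]
and so $\E X_n\sim n\rho'(0)/\rho(0)$. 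The same computation at second order gives $\E(X_n)_2=\rho'^2n^2/\rho^2+n(\rho''\rho-\rho'^2)/\rho^2+O(n)$ terms coming from $c'$. The variance is $\E(X_n)_2+\E X_n-(\E X_n)^2$. In it the $n^2$ terms cancel, and the $c'(0)/c(0)$ contributions also cancel at order $n$. This cancellation is the delicate bookkeeping step. I would organize it via $\log([z^n]M(z,1,x))=n\log\rho(x)+\log c(x)+\dots$ and cumulants: the variance equals $n\bigl(\log\rho(e^t)\bigr)''|_{t=0}+O(1)$ with $x=e^t-1$, which produces the stated constant $(\rho''\rho+\rho'\rho-\rho'^2)/\rho^2$.

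\textbf{High moments.} For $k=\Theta(\sqrt n)$, Lemma~\ref{lem:bivar} gives
\[
m^{\circ,\times}_{n,k}/m_n\sim\left(\tfrac{\rho'(0)}{\rho(0)}n\right)^k e^{\frac{k^2}{2n}\left(\frac{\rho''\rho}{\rho'^2}-1\right)},
\]
using $m_n\sim c_0n^{-5/2}\rho(0)^n$. It remains to transfer this to $\E((X_n)_k)=m^\circ_{n,k}/m_n$ using the sandwich \eqref{eq:sandw} of Lemma~\ref{lem:S1}. Its hypotheses hold: we have $\mu_n,\sigma_n^2\in\Theta(n)$ from the previous step (provided $\rho'(0)>0$ and the variance constant is positive; the degenerate case is trivial). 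When $\map{p}$ has a non-simple boundary, we also need the maximal degree tail bound of \cite{max_deg}. The error term $(\mu_n/2)^km_n$ is negligible relative to $(\mu_n)^k e^{O(1)}m_n$ by the factor $2^{-k}\to0$, and $o(m_{n,k})$ is negligible by definition.

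\textbf{Main obstacle.} The main obstacle is checking that Lemma~\ref{lem:S1} applies as stated. In particular, I need the bound $(\mu_n/2)^k$ to be compared against $\mu_n^k$ with the same $\mu_n$, so that the $e^{k^2/2n}$ correction, which is bounded for $k=\Theta(\sqrt n)$, cannot overturn the factor $2^{-k}$.
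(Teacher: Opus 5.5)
Your proposal is correct and follows essentially the paper's route. You differentiate the uniform asymptotics of $[z^n]M(z,1,x)$ at $x=0$ for the mean and variance, which is legitimate since $m^{\circ}_{n,k}=m^{\circ,\times}_{n,k}$ for $k\le 2$. For $k=\Theta(\sqrt n)$ you combine Lemma~\ref{lem:bivar} with the sandwich of Lemma~\ref{lem:S1}, where the term $(\mu_n/2)^k m_n$ is negligible by the factor $2^{-k}$. You also make explicit the Cauchy-estimate justification and the cancellation of the $c'(0)/c(0)$ terms, which the paper leaves to the reader.

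One small slip: in your cumulant formulation the function should be $t\mapsto\log\rho(e^t-1)$ rather than $\log\rho(e^t)$. Its second derivative at $t=0$ is indeed $(\rho''(0)\rho(0)+\rho'(0)\rho(0)-\rho'(0)^2)/\rho(0)^2$.
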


Finally, we derive from Theorem~\ref{thm:gao} and Lemma~\ref{lem:moments} the desired central limit theorem.

\begin{example}[Flies]
    We consider the example of two 2-gons meeting at a vertex in Figure~\ref{fig:eyes1}, which we call a fly. The figure also illustrates, that a map with a root edge incident to a distinguished and labeled pattern occurrence can be decomposed into the root edge and a map with partial simple boundary of length $3$ which we glue at two vertices on the boundary path.
    
    Its six intersection types are listed in Figure~\ref{fig:eyes}. Note that the fact that the pattern occurrences are distinguished are relevant to the number of rotations of an intersection type. For example, Type 2 has two distinct rotations if the pattern occurrences were not distinguished. But in this case, the face that is shared by both pattern occurrences is distinguishable from the other two and breaks the rotational symmetry such that there are six possible rotations of the intersection type.
    \begin{align*}
        B(z,u,x) &= 1+zuB(z,u,x)^2+z\Delta B(z,u,x) + 2xP_3(z,u,x) +3x^2P_5(z,u,x)\\
        &+x^2\left(2P_7(z,u,x)+6P_5(z,u,x)(1+z+[u]P_1(z,u,x)\right)\\
        &+x^2\left(P_3(z,u,x)(4(z+[u]P_{1}(z,u,x))+2(z+[u]P_{1}(z,u,x))^2\right)
    \end{align*}
    Now, Theorem~\ref{thm:clt} immediately tells us that the number $X_n$ of flies in a random planar map satisfies a Gaussian central limit theorem.
\end{example}

\begin{figure}
    \centering
    \includegraphics[width=0.45\linewidth, page=1]{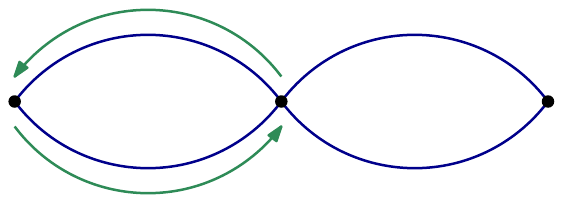} \includegraphics[width=0.45\linewidth, page=2]{eyes.pdf}\\\vspace{-5mm}
    \caption{Left: a fly. The green arrows correspond to the rotations of the pattern occurrence.\\
    Right: Decomposition of map where the root edge is incident to a distinguished fly into the root edge and a map with simple partial boundary.}
    \label{fig:eyes1}
    \vspace{4mm}
    \includegraphics[width=0.48\linewidth, page=3]{eyes.pdf}
    \includegraphics[width=0.48\linewidth, page=4]{eyes.pdf}\hspace{-10mm}$\,$\\
    \caption{The seven intersection types of two flies. The green arrows correspond to the possible rotations of the intersection type. The yellow faces are deep faces. Deep faces of valency $2$ are allowed to contract to a single edge in this listing.}
    \label{fig:eyes}
\end{figure}

\section{Extension to further map classes}

In this section, we adapt the proof in the previous section to bipartite and 2-connected planar maps. The simplified proof in the previous section is particularly interesting for the latter class of maps, since the proof in~\cite{patternocc} would have required us to delete the interior edges of all marked patterns, which in turn could have created cut vertices. Therefore, the resulting map would not have been 2-connected and we could not have simply applied existing theory on face counts in a random 2-connected map. However, the simplified proof extends quite straightforwardly, since 2-connected maps also satisfy a classic root edge decomposition.

Further, the crucial technical lemmas and theorems (such as Theorem~\ref{thm:gao}, Lemmas~\ref{lem:moments} and~\ref{lem:S1}) are already proven. So, our focus for both map classes is to
\begin{enumerate}
    \item set up an adequate functional equation with an additional counting variable accounting for single labeled pattern occurrences and pairs of intersecting labeled pattern occurrences.
    \item prove that we may apply Lemma~\ref{lem:S1} to the functional equation and its solution(s). In particular that the coefficients satisfy asymptotic formulas of the form
    \[
        c(x)n^{-\frac{5}{2}}\rho(x)^n
    \]
    where $c(x)$ and $\rho(x)$ are suitable analytic functions around $x=0$.
\end{enumerate}

Analogously to the general case, we adopt the root edge decomposition to describe the generating functions with additional counting variables and set up functional equations for pattern counts accordingly. Finally, we use Lemma~\ref{lem:moments} to compute the asymptotics of expectation, variance and higher factorial moments of the pattern counts and derive a central limit theorem.

\subsection{Bipartite maps}

We start by formally defining bipartite maps.
\begin{definition}[Bipartite (planar) maps]
    A \emph{planar map} is  \emph{bipartite} if and only if there exists a proper $2$-(vertex-)coloring. This is equivalent to restricting the faces of the planar maps to have even degrees. We denote their generating function by
        \[
            B(z,u) = \sum_{n,j\geq 0} b_{j,n}u^jz^n,
        \]
    where $z$ marks the number of edges and $u$ the root face valency divided by $2$ to avoid any periodicities.
\end{definition}

The main result for pattern counts in random bipartite maps is the following theorem.

\begin{theorem}\label{thm:bipartite}
    Let $\map{p}$ be a planar bipartite map with simple boundary and $X_n$ the number of pattern occurrences of $\map{p}$ in a random bipartite map of size $n$. Then,
    \[
        \frac{X_n-\mu_n}{\sigma_n} \rightarrow \mathcal{N}(0,1)
    \]
    where $\mu_n = c_1n$ and $\sigma_n = c_2\sqrt{n}$ for some computable constants $c_1,c_2 \geq 0$ as $n\rightarrow \infty$.
\end{theorem}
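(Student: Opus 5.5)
We follow the two-step programme laid out at the start of this section: (1) set up a functional equation for bipartite maps with distinguished, labeled occurrences of $\map{p}$ that intersect at most pairwise; (2) verify that its solution satisfies the hypotheses of Theorem~\ref{thm:dny}. Once this is done, Lemma~\ref{lem:bivar} and Lemma~\ref{lem:moments} give the factorial moments. Lemma~\ref{lem:S1} with simple boundary needs no degree bound and transfers these moments to the full count. Theorem~\ref{thm:gao} then yields the central limit theorem.

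For step (1), we use Tutte's root edge deletion for bipartite maps, with $u$ marking half the root face valency. The base equation is
\[
B(z,u)=1+zuB(z,u)^2+zu\frac{B(z,u)-B(z,1)}{u-1}.
\]
In the bipartite setting, removing the root edge either disconnects the map or merges two even faces into one even face. We add a term $x\,r\,z^{e-v+1}u^{-(v-2)/2}P^{\mathcal B}_{v-1}(z,u,x)$ for a root edge incident to a single labeled occurrence. We also add $x^2$-terms, one for each intersection type $i$, with deep faces filled by bipartite maps with simple boundary of the appropriate length, enumerated by $[u^{j/2}]P^{\mathcal B}_{j-1}$. Since $\map{p}$ has simple boundary, no pinch points need splitting, so gluing along the boundary is bijective exactly as in the proof of the proposition. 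Every intersection type of two bipartite patterns is bipartite, since it is a submap of a bipartite map. Hence all its deep faces and its root face have even valency, and the exponents are integers in the halved variable.

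The main obstacle is the bipartite analogue of Lemma~\ref{lem:mov_bi}. We need that the partial simple boundary functions $P^{\mathcal B}_i(z,u,x)$ are analytic functions of $z,u,x$ and $B(z,1,x)$ in a region containing the singular point $(\rho(0),1,0)$. Only then does the equation have the form required in Theorem~\ref{thm:dny} after the substitution $u\mapsto 1+w$.

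To get this, we would write the coefficients $b_i(z,x)=[u^i]B(z,u,x)$ as an infinite system $b_i=\Phi_i(z,x,b_0,b_1,\dots;B(z,1,x))$ obtained by extracting coefficients in the functional equation. We then show that $\Phi$ is a contraction on a weighted $\ell^1(\C)$ space of sequences, for $|z|$ slightly beyond the bipartite radius of convergence, $|B(z,1,x)-1|$ bounded and $|x|\le x_0$. The $x$-terms perturb the $x=0$ system only by terms of order $|x|$ times bounded weights, so contraction persists for small $x_0$. By the implicit function theorem in Banach spaces, the fixed point depends analytically on $(z,x,B(z,1,x))$. The $P^{\mathcal B}_i$ are obtained from the $b_j$ by finitely many operations of the same kind, as in~\cite{Yu}.

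The explicit numerical constants replacing $13/25$ and $5/4$ must be chosen so that the known bipartite singular point lies inside the region. This needs some care, and here we would use the known explicit parametrisation of $B(z,1)$.

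With analyticity in hand, the remaining hypotheses of Theorem~\ref{thm:dny} are checked directly. At $x=0$ the equation reduces to the bipartite Tutte equation, which has nonnegative coefficients. Aperiodicity (a unique dominant singularity) is guaranteed by the choice $u\leftrightarrow$ valency$/2$. Theorem~\ref{thm:dny} then gives the uniform asymptotics $[z^n]B(z,1,x)\sim c(x)n^{-5/2}\rho(x)^n$. Lemma~\ref{lem:moments} supplies $\mu_n,\sigma_n^2=\Theta(n)$, provided $\rho'(0)\neq0$ and the variance constant is positive. Positivity follows from the standard non-degeneracy argument: the count is not asymptotically a.s. linear. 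The conditions $\sigma_n\log^2\sigma_n=o(\mu_n)$ and $\mu_n=o(\sigma_n^3)$ then hold, and Theorem~\ref{thm:gao} concludes the proof.
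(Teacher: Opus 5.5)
Your proposal is correct and follows essentially the paper's route: the root-edge equation of Proposition~\ref{prop:bip} with partial simple boundaries, then analyticity of the $b_i(z,x)$ in $z$, $x$ and $B(z,1,x)$ via a contraction on $\ell^1(\C)$, then Theorem~\ref{thm:dny}, Lemmas~\ref{lem:bivar}, \ref{lem:moments}, \ref{lem:S1}, and finally Theorem~\ref{thm:gao}. The only differences are minor: the paper's Lemma~\ref{lem:mov_bip} uses $|z|\le 2/15$, $|B(z,1,x)-1|\le 5/4$ and plain Banach iteration rather than your weighted norm and implicit function theorem, and you explicitly flag the non-degeneracy $\rho'(0)\neq 0$, $c_2>0$ that the paper leaves implicit; one small correction is that the uniqueness of the dominant singularity of $B(z,1,0)$ in $z$ comes from the explicit enumeration of rooted bipartite maps (singular only at $z=1/8$), not from halving $u$, which only removes the symmetry $u\mapsto -u$.
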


We expect the theorem to hold for patterns without simple boundary as well. However, since the exponential tail bound on the maximal degree in bipartite maps does not seem to be available in the literature, we restrict the statement to patterns with simple boundary. 

As already lined out, our goal is to first set up a functional equation for the generating function of bipartite maps enriched with a variable counting distinguished labeled pattern occurrences which intersect at most pairwise. Subsequently, we prove that the equation satisfies the analytic assumptions of Lemma~\ref{lem:bivar} such that we can deduce the desired asymptotics of the high moments in Theorem~\ref{thm:gao}.

Again, we will need maps with partial simple boundary to properly decompose the maps. Their generating functions in bipartite maps is described in the following lemma. 

\begin{lemma}\label{lem:partial}
	Let $P^\mathcal{B}_i(z,u,x)$ be the generating function of planar bipartite maps with partial simple boundary of length $i$, where $z$ counts the number of edges, $u$ the root face valency divided by $2$, $x$ some distinguished labeled pattern occurrences which intersect at most pairwise. They satisfy the recursion
	\begin{align*}
		P^{\mathcal{B}}_{2i-1}(z,u,x) &= B(z,u,x)-\sum_{j=0}^{i-1}b_{i}(z,x)u^{i}-\sum_{j=1}^{i-1}P^{\mathcal{B}}_{2j-1}(z,u,x)u^{i-j}[u^{i-j}]B(z,u,x)^{2j}\\
		P^{\mathcal{B}}_{2i}(z,u,x) &= B(z,u,x)-\sum_{j=0}^{i-1}b_{i}(z,x)u^{i}-\sum_{j=0}^{i-1}P^{\mathcal{B}}_{2j}(z,u,x)u^{i-j}[u^{i-j}]B(z,u,x)^{2j+1}
	\end{align*}
    for $i\geq 1$ and initial condition $P^{\mathcal{B}}_{0}(z,u,x) = B(z,u,x)$. 
    
    In particular, the generating function of bipartite maps with partial simple boundary of length $i$ can be expressed as a polynomial in $B(z,u,x)$ and $b_0(z,x),\dots,b_i(z,x)$.
\end{lemma}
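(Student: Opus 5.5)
We will prove the recursion by inclusion–exclusion on where the boundary walk first fails to be simple, following the approach of~\cite{Yu} for general maps and adapting it to the bipartite setting. Fix a bipartite map $\map{m}$ with root face valency $2m$ (so weight $u^m$). Walk along the root face boundary starting at the root vertex, in the direction of the root edge. Either the first $\ell$ steps visit distinct vertices and edges, or there is a \emph{first} step at which the walk returns to an already visited vertex. The key observation is that, in a planar map, such a first return splits $\map{m}$ at that vertex into two pieces. The first piece is a map whose boundary walk (read from the root) is simple up to the return point. The second piece is everything attached at the repeated vertex, namely a sequence of maps glued there. Bipartiteness forces the length of the closed subwalk to be even. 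This is why odd partial lengths $2i-1$ and even lengths $2i$ must be treated separately, and why the two recursions differ in the exponent ($2j$ versus $2j+1$).

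\textbf{Step 1 (the subtracted terms).} I will establish that $P^\mathcal{B}_\ell$ equals $B$ minus two corrections. The first correction removes maps whose root face is too short to carry a partial simple boundary of length $\ell$, that is, root face valency at most $2(i-1)$ or so; these are exactly the terms $b_j(z,x)u^j$. The second correction removes maps whose first self-intersection occurs at step $2j-1$ (respectively $2j$) with $j<i$. For the second correction I will decompose such a map as follows. There is an initial simple part, counted by $P^\mathcal{B}_{2j-1}$ (resp.\ $P^\mathcal{B}_{2j}$). Attached to the boundary vertices of this part are the submaps hanging off, and those hanging submaps account for the remaining root face length. Gluing $2j$ (resp.\ $2j+1$) arbitrary bipartite maps at the boundary vertices of the simple path gives the factor $B^{2j}$ (resp.\ $B^{2j+1}$). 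Extracting the coefficient $[u^{i-j}]$ and re-multiplying by $u^{i-j}$ records that exactly the right amount of extra half-valency is used before the path can be continued.

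\textbf{Step 2 (compatibility with $x$).} I will check that this decomposition is compatible with the marking variable $x$. Distinguished labeled pattern occurrences, and intersecting pairs of them, lie entirely inside one of the glued components, since gluing at a cut vertex cannot create a face. It can create a pinch-type occurrence only through a boundary vertex, which is handled just as in the proof of the Proposition. Hence $x$ is simply inherited multiplicatively, and the recursion holds verbatim with $x$ present.

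\textbf{Step 3 (final claim).} The last claim follows by induction on $i$. Here $[u^{r}]B^{s}$ is a polynomial in $b_0,\dots,b_r$, and $P^\mathcal{B}_0=B$.

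I expect the main obstacle to be the bookkeeping in Step 1. The difficulty is ensuring the correspondence is bijective, without double counting maps whose boundary fails to be simple at several places, and getting the parity and index ranges exactly right in the bipartite half-valency normalization. I would first verify the recursion for small $i$ (say $P^\mathcal{B}_1$, $P^\mathcal{B}_2$) against direct enumeration to fix the indices, and then write the general bijection.
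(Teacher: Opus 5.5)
Your target formula and the product $P^{\mathcal B}_{2j-1}\,u^{i-j}[u^{i-j}]B^{2j}$ are right, but Step~1 as described does not produce them. You index the second correction by the step at which the boundary walk first revisits a vertex, using a two-piece split at that vertex. That is not what the terms of the recursion count.

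\begin{itemize}
\item For $\ell=2$, the failing maps are those with $v_2=v_0$, so the first return is at step $2$. The recursion counts them in the $j=0$ term $P^{\mathcal B}_0\,u[u]B=B\,b_1u$. Your indexing (``first return at step $2j$ with $j<i=1$'') produces no such term.
\item For $\ell=3$, the failing walks begin $v_0v_1v_0$ or $v_0v_1v_2v_1$, with first returns at steps $2$ and $3$ (step $1$ is impossible, since bipartite maps have no loops). Both lie in the single term $P^{\mathcal B}_1\,u[u]B^2=2b_1u(B-1)$, which your ``first self-intersection at step $2j-1$'' would not capture.
\end{itemize}

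In general, first returns occur at steps of either parity, so bipartiteness says nothing about the first-return step. Cutting only at the first return also leaves later failures unresolved. That is exactly the double counting you anticipated, and your plan gives no mechanism to avoid it.

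The missing idea, which the paper uses, is to index by the length $\ell'$ of a canonical simple path extracted from the first $\ell$ steps. For each vertex, keep only its last visit among $v_0,\dots,v_\ell$; equivalently, excise every closed sub-walk completed within the first $\ell$ steps. This leaves a simple path $w_0,\dots,w_{\ell'}$.

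\begin{itemize}
\item A vertex repeated on the root-face walk is a cut vertex. Hence each excised sub-walk is the entire root-face boundary of a bipartite submap glued into the corner at one of the $\ell'+1$ path vertices. The piece at $w_0$ may contain the original root edge.
\item This gives the factor $B^{\ell'+1}$, that is, $B^{2j}$ for $\ell'=2j-1$ and $B^{2j+1}$ for $\ell'=2j$.
\item Each excised sub-walk is closed, hence of even length. So $\ell'\equiv\ell \pmod 2$, $\ell'\le \ell-2$ whenever the boundary fails to be simple, and the glued pieces carry total half-valency $(\ell-\ell')/2=i-j$.
\item The remaining map has a partial simple boundary of length $\ell'$.
\item Because the path is determined canonically, the decomposition is a bijection.
\end{itemize}

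Mind the index range in the even case. Reading the first correction as $\sum_{j=0}^{i-1}b_ju^j$, it removes only half-valencies up to $i-1$. Maps of half-valency exactly $i$ are removed by the $j=0$ term, because $P^{\mathcal B}_0=B$ contains the vertex map ($b_0=1$). With Step~1 repaired this way, your Steps~2 and~3 go through.
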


The proof is analogous to the case of general planar maps and is posponed to Section~\ref{sec:proofs}.

\begin{proposition}\label{prop:bip}
    Let $\map{p}$ be a planar bipartite map with $e$ edges, a simple boundary of length $v$ and $r$ rotational symmetries. Further, we enumerate its intersection types by $1$ to $I$, and let $e_i$ be the number of edges, $v_i$ the root face valency, $d_{i,j}$ the number of deep $j$-faces and $r_i$ the number of rotations of intersection type $i \in [I]$. Then the generating function $B(z,u,x)$ for planar bipartite maps with some distinguished labelled pattern occurrences of~$\map{p}$, intersecting at most pairwise satisfies the equation
    \begin{align}
        B(z,u,x) &= 1+zuB(z,u,x)^2 + zu\frac{B(z,1,x)-B(z,u,x)}{1-u} + x\frac{rz^{e-v+1}}{u^{\frac{v-2}{2}}}P^\mathcal{B}_{v-1}(z,u,x)\nonumber \\
        &\quad+ x^2\sum_{i=1}^I \frac{r_iz^{e_i-v_i+1}}{u^{\frac{v_i-2}{2}}}P^\mathcal{B}_{v_i-1}(z,u,x)\prod_{j\geq 1}\left([u^j]P_{j-1}^{\mathcal{B}}(z,u,x)\right)^{d_{i,j}}, \label{eq:bipartite}
    \end{align}
    where $z$ counts the number of edges, $u$ the root face valency divided by $2$, $x$ the number of distinguished occurrences of $\map{p}$.
\end{proposition}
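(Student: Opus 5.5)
We follow the proof of the corresponding proposition for general planar maps and use Tutte's root edge deletion, now applied inside the class of bipartite maps. Let $\mathcal{B}^x$ be the class of bipartite maps with some distinguished, labeled occurrences of $\map{p}$ that intersect at most pairwise; $x$ marks each labeled occurrence. We sort a map of $\mathcal{B}^x$ by what the root edge touches, and show that each class corresponds to exactly one term of \eqref{eq:bipartite}. Since faces of bipartite maps have even valency, $u$ records half the root face valency. The first step is therefore a bookkeeping check that all exponents of $u$ are integers. Here $v$ and every $v_i$ are even, because $\map{p}$ and its intersection types are bipartite (an intersection type is a submap of a bipartite map). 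So the shifts $u^{-(v-2)/2}$ and $u^{-(v_i-2)/2}$ make sense.

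\emph{Step 1: the root edge is not incident to any distinguished occurrence.} There are three cases.
\begin{itemize}
\item The map is a single vertex. This gives the term $1$.
\item The root edge is a bridge. Deleting it leaves two bipartite maps, and the root face valency becomes the sum of their valencies plus $2$. In halved units this gives $zuB(z,u,x)^2$.
\item The root edge is not a bridge. Deleting it merges the root face with the face on the other side. For this case I will compare with Tutte's operator for bipartite maps. Merging two even faces of half valencies $a$ and $b$ gives a face of half valency $a+b-1$. Inverting this operation (reinserting an edge) yields $zu\sum_j b_j(z,x)(1+u+\dots+u^{j})$. This equals $zu\,\frac{B(z,1,x)-uB(z,u,x)}{1-u}$, up to how the constant term is handled. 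I will reconcile this expression with the form $zu\frac{B(z,1,x)-B(z,u,x)}{1-u}$ written in the statement; the exact form depends on the convention for re-adding the edge, and that is where I would double-check.
\end{itemize}
In both the bridge case and the non-bridge case, the labeled occurrences avoid the root edge. Hence they survive deletion unchanged, and $x$ is just carried along.

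\emph{Step 2: the root edge lies on a distinguished occurrence.} First suppose that occurrence meets no other labeled occurrence. Because $\map{p}$ has a simple boundary, its boundary is a cycle of length $v$. Choose one of the $r$ rotations of $\map{p}$ that places the root edge on this cycle, then remove the interior of $\map{p}$ together with the root edge. What remains is a bipartite map with a partial simple boundary of length $v-1$. This is the boundary path of $\map{p}$ minus the root edge, which consists of $v-1$ distinct edges on $v$ distinct vertices. The partial simple boundary is exactly the condition that makes regluing $\map{p}$ along the path well defined and injective. Bipartiteness is preserved in both directions, since the glued cycle has even length. Counting the change in edges gives $z^{e-v+1}$, and the root face valency drops by $v-2$, which is $(v-2)/2$ in halved units. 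This yields $x\,r z^{e-v+1}u^{-(v-2)/2}P^{\mathcal{B}}_{v-1}$, where the generating function $P^{\mathcal B}_{v-1}$ comes from Lemma~\ref{lem:partial}.

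Next suppose the root edge lies on an intersecting pair of type $i$. Proceed in the same way with the $r_i$ rotations of the type, to obtain $x^2 r_i z^{e_i-v_i+1}u^{-(v_i-2)/2}P^{\mathcal B}_{v_i-1}$. In addition, each deep face of valency $j$ is filled independently by a bipartite map with simple boundary of that valency. These maps are counted by $[u^j]P^{\mathcal B}_{j-1}$, with $j$ read in halved units to match the paper's indexing, and this produces the product over $j$.

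\emph{Main obstacle: bijectivity.} We must show that these decompositions are bijective and that no map is counted twice. The points to verify are:
\begin{itemize}
\item The occurrence or pair containing the root edge is unique, because labeled occurrences intersect at most pairwise.
\item The remaining labeled occurrences lie entirely inside the residual map or inside the deep-face fillings, and are not cut by the gluing.
\item Deep faces of an intersection type whose boundary is not simple can still be filled by gluing boundary vertices of a simple-boundary map, exactly as in the general planar proof.
\end{itemize}
I expect the third point to be the most delicate. Deep faces may contain repeated vertices, so I would treat them as in the general case, identifying vertices of the filling map; bipartiteness is still preserved because the face valency is even.
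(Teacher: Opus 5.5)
Your overall approach matches the paper's. You use Tutte's root-edge deletion and take three terms for a root edge avoiding all labeled occurrences. In the remaining cases you cut out the occurrence (or intersecting pair) via one of its $r$ (resp.\ $r_i$) rotations, which leaves a map with partial simple boundary of length $v-1$ (resp.\ $v_i-1$), with shifts $z^{e-v+1}$ and $u^{-(v-2)/2}$, and you fill the deep faces. On bijectivity the paper says no more than you do.

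The genuine problem is your non-bridge case, and it is not a matter of how the constant term or edge re-insertion is conventionally handled. Your merging rule is correct: deleting a non-bridge root edge merges faces of half-valencies $a,b\ge 1$ into one of half-valency $a+b-1$. But your inversion lets the new root face have half-valency $1,\dots,j+1$, which is the count for general maps. In a bipartite map the inserted edge must join the root corner to a corner of the opposite colour. For a root face of valency $2j$, these are the $j$ corners at odd positions $1,3,\dots,2j-1$. This gives new root half-valencies $a\in\{1,\dots,j\}$ (consistent with $b=j+1-a\ge 1$), hence
\[
z\sum_{j\ge 0} b_j(z,x)\,(u+u^2+\dots+u^{j}) \;=\; zu\,\frac{B(z,1,x)-B(z,u,x)}{1-u},
\]
which is exactly the term in the statement. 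Your expression $zu\frac{B(z,1,x)-uB(z,u,x)}{1-u}$ exceeds this by $zuB(z,u,x)$. That excess counts edges inserted between corners of the same colour, which create odd faces. For instance, at $j=0$ it adds the loop on the vertex map, so your order-$z^1$ coefficient is $2u$ instead of the correct $u$. The correct term gives $2u^2+u$ at order $z^2$, matching the three rooted bipartite maps with two edges. As written, Step 1 therefore derives a different, wrong equation; the colour argument above is the missing step that fixes it.

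A smaller point concerns the deep faces. Reading $j$ ``in halved units'' does not make $[u^j]P^{\mathcal B}_{j-1}$ the right factor. A bipartite map with simple boundary of valency $2j$ has a partial simple boundary of length $2j-1$, so the factor is $[u^{j}]P^{\mathcal B}_{2j-1}$. Equivalently, if $j$ denotes the actual valency, it is $[u^{j/2}]P^{\mathcal B}_{j-1}$. The statement's indexing has to be interpreted in this way.
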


\begin{proof}
    Again, we use the classic decomposition scheme by Tutte from the 1960s \cite{tutte_1963} which is based on deleting the root edge. The first three terms correspond to the cases where the root edge is not incident to a pattern occurrence.  

    Analogous to general maps, the fourth term corresponds to the case where the root edge is incident to a labeled pattern occurrence which does not intersect with any other labeled pattern occurrence. Note that gluing one of these $r$ rotational symmetries to the map with partial simple boundary will decrease the root face valency of the map by $v-2$ but $u$ counts the root face valency divided by $2$. Hence, we obtain a factor $u^{\frac{v-2}{2}}$.\\
    The cases where the root edge is incident to an intersection type are handled analogously.
\end{proof}

Now that the functional equation for $B(z,u,x)$ is established, we need to check whether it satisfies the assumptions of Theorem~\ref{thm:dny}. In particular, we need the following lemma. 

\begin{lemma}\label{lem:mov_bip}
	The generating functions $b_i(z,x)$ of planar bipartite maps with root face valency~$i$ are analytic functions in $z,x$ and $B(z,1,x)$ if $|z|\leq \frac{2}{15}$, $|B(z,1,x)-1| \leq \frac{5}{4}$ and $|x| \leq x_0$ for some $x_0>0$, small enough.
\end{lemma}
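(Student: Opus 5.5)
We follow the fixed point argument on $\ell^1(\C)$ sequences, as in the proof of Lemma~\ref{lem:mov_bi} for general maps in \cite{DrPa,Yu,patternocc}. The strategy is to extract from the functional equation of Proposition~\ref{prop:bip} a system of equations for the sequence $(b_i(z,x))_{i\ge 0}$. In this system $B(z,1,x)=\sum_i b_i(z,x)$ is treated as an independent parameter $w$, so that $b_i$ can be written as analytic functions of $(z,x,w)$.

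Comparing coefficients of $u^i$ in \eqref{eq:bipartite} gives, for $i\geq 1$,
\[
b_i = z\sum_{j=0}^{i-1} b_j b_{i-1-j} + z\Bigl(w-\sum_{j=0}^{i-1} b_j\Bigr) + x\,R_i(z,x,b_0,b_1,\dots) + x^2 S_i(z,x,b_0,b_1,\dots),
\]
with $b_0=1$. The division term contributes $z(w-\sum_{j<i}b_j)=z\sum_{j\ge i}b_j$. The terms $R_i,S_i$ come from $rz^{e-v+1}u^{-(v-2)/2}P^{\mathcal B}_{v-1}$ and from the intersection type sum.

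By Lemma~\ref{lem:partial}, each $P^{\mathcal B}_j$ is a polynomial in $B(z,u,x)$ and $b_0,\dots,b_j$. Its coefficient $[u^m]$ is therefore a polynomial expression in finitely many $b_\ell$ together with convolutions of the sequence $(b_\ell)$. The division by $u^{(v-2)/2}$ only shifts indices, since the partial simple boundary forces the low coefficients to vanish.

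Write the system as $\mathbf b = \Phi(z,x,w;\mathbf b)$ on the Banach space $\ell^1(\C)$. The first step is to show that $\Phi$ maps a closed ball $\{\|\mathbf b\|_1\le K\}$ into itself and is a contraction, for $|z|\le 2/15$, $|w-1|\le 5/4$ and $x=0$. At $x=0$ the map is exactly the bipartite Tutte operator. Its linear part in $\mathbf b$ has norm bounded by roughly $2|z|\|\mathbf b\|_1+|z|$. This is where the explicit constant $2/15$ enters: one chooses $K$ (around $1+|w-1|+$ small) so that $2|z|K<1$ and self-mapping holds, and checks the arithmetic with $|w-1|\le 5/4$. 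The constant must be smaller than in the general case because of the index doubling in the bipartite setting.

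Next, $\Phi$ is analytic in $(z,x,w)$ as a map into $\ell^1$, and its $x$-dependent part is a finite sum of products of convolutions. That part is bounded on the ball by $C|x|$. Hence for $|x|\le x_0$ small the contraction and self-mapping properties persist by continuity.

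The Banach fixed point theorem then gives a unique fixed point. The analytic implicit function theorem on Banach spaces (equivalently, uniform convergence of the analytic iterates) shows that each $b_i(z,x)$ is analytic in $z$, $x$ and $w=B(z,1,x)$ on the stated domain. Finally, one checks consistency: the actual generating functions coincide with this fixed point near the origin, and hence on the whole domain by analytic continuation.

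The main obstacle is bounding the $x$-terms uniformly in $\ell^1$. The coefficients $[u^j]P^{\mathcal B}_{j-1}$ filling deep faces involve powers $B(z,u,x)^{2j}$ as in Lemma~\ref{lem:partial}, but only finitely many $j$ occur for a fixed pattern. So these are polynomial expressions whose $\ell^1$-norms are bounded by polynomials in $K$. I would first verify these bounds crudely via $\|\mathbf a*\mathbf c\|_1\le\|\mathbf a\|_1\|\mathbf c\|_1$ and then absorb them by taking $x_0$ small. The remaining delicate point is the numerical verification of the $x=0$ contraction with the constants $2/15$ and $5/4$.
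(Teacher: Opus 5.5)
Your route is the paper's route. You freeze $w=B(z,1,x)$, run a Banach fixed-point iteration for $(b_i)$ in $\ell^1(\C)$ with $|z|\le\frac{2}{15}$, and absorb the pattern terms by taking $|x|$ small, which is the role of the paper's constants $M_1,M_2$. There is a genuine gap in the central estimate, and the paper's displayed bound $|z||\mathbf y|_1^2+|z||Y|+|z||\mathbf y|_1+xM_1$ has the same defect.

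\textbf{The gap.} Once $w$ is a free parameter, the identity $z(w-\sum_{j<i}b_j)=z\sum_{j\ge i}b_j$ is lost, and $\Phi$ does not map $\ell^1(\C)$ into itself. For $x=0$ and $\mathbf b=(1,0,0,\dots)$ you get $\Phi(\mathbf b)=(1,zw,z(w-1),z(w-1),\dots)$, which is not summable unless $w=1$. Equivalently, the partial-sum operator $\mathbf b\mapsto(\sum_{j<i}b_j)_{i\ge1}$ is unbounded on $\ell^1$, so your bound $2|z|\|\mathbf b\|_1+|z|$ for the linear part is false. Restricting to the set $\{\sum_j b_j=w\}$ does not help:
\begin{itemize}
\item that set is not invariant, since at $x=0$ one finds $\sum_i\Phi(\mathbf b)_i=1+zw^2+z\sum_j j\,b_j$ there;
\item on it the tail sums are controlled only by the first moment $\sum_j j|b_j|$, not by $\|\mathbf b\|_1$.
\end{itemize}
Analytically, $u\,(w-B(u))/(1-u)$ has a pole at $u=1$ unless $w=B(1)$. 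So no argument on the circle $|u|=1$ can work.

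\textbf{The repair.} Work strictly inside the unit disc, with the norm $\|\mathbf b\|_r=\sum_{i\ge0}|b_i|r^i$ for a fixed $r<1$. Then
\[
\sum_{i\ge1}r^i\Bigl|\sum_{j<i}b_jb_{i-1-j}\Bigr|\le r\|\mathbf b\|_r^2,\qquad \sum_{i\ge1}r^i\Bigl|w-\sum_{j<i}b_j\Bigr|\le\frac{r}{1-r}\bigl(|w|+\|\mathbf b\|_r\bigr).
\]
The pattern terms are also harmless in this norm:
\begin{itemize}
\item the index shift coming from $u^{-(v-2)/2}$ costs a factor $r^{-(v-2)/2}$;
\item single coefficients satisfy $|b_k|\le r^{-k}\|\mathbf b\|_r$;
\item $\|\cdot\|_r$ is submultiplicative under convolution.
\end{itemize}
Hence the pattern terms are $O(|x|)$, with Lipschitz constant $O(|x|)$, on bounded sets. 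Take $r=\frac12$, $|z|\le\frac{2}{15}$, $|w|\le\frac94$ (which follows from $|w-1|\le\frac54$), and the ball $\|\mathbf b\|_r\le 2$ with $b_0=1$. The self-mapping bound is $1+\frac{4}{15}+\frac{17}{30}+O(|x|)<2$, and the Lipschitz constant is $\frac{6}{15}+O(|x|)<1$.

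With this change the rest of your plan goes through: analyticity of the limit of the analytic iterates, identification with the true $b_i(z,x)$ near the origin by uniqueness in the ball, then analytic continuation. Keep that identification step; the paper leaves it implicit. Note also that at $x=0$ the recursion is triangular, so each $b_i$ is simply a polynomial in $z$ and $w$. The fixed point is needed only because the pattern terms involve $b_{i+s}$ with $s>0$.
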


The proof uses a fixed point argument, similar to the one in~\cite{DrPa} and can be found in in Section~\ref{sec:proofs}. Theorem~\ref{thm:bipartite} now follows immediately.

\begin{proof}[Proof of Theorem~\ref{thm:bipartite}]
    By Lemma~\ref{lem:mov_bip} we can apply Theorem~\ref{thm:dny} to the functional equation in Proposition~\ref{prop:bip} and further deduce that $[z^n]B(z,1,x)$ satisfies the universal asymptotics given by Lemma~\ref{lem:bivar}. Consequently,
    the expectation and variance of the number of pattern occurrences of \map{p} is linear in $n$, since they can be computed from the number of maps with one or two distinguished labeled pattern occurrences divided by all bipartite maps. \\
    For higher moments, we can now apply Lemma~\ref{lem:S1}, that says the number of maps with $k$ distinguished, labeled pattern occurrences is asymptotically equal to the number of maps with $k$ distinguished labeled pattern occurrences that intersect at most pairwise if $k = \Theta(\sqrt{n})$. These maps are counted by in Proposition~\ref{prop:bip} and satisfy the asymptotics given in Lemma~\ref{lem:moments}. Hence, the factorial moments grow for $k=\Theta(\sqrt{n})$ like
    \[
        \E[(X_n)_k] \sim \mu_n^k\exp\left(\frac{k^2}{2}\frac{\sigma_n^2-\mu_n}{\mu_n^2}\right).
    \]
    Theorem~\ref{thm:gao} then yields a Gaussian central limit law.
\end{proof}

\subsection{2-connected maps}

\begin{definition}\label{def:classes} 
    \emph{2-connected} maps (also known as non-separable maps) are maps without cut-vertices. That is, they remain connected upon deletion of any vertex. We denote their generating function by
        \[
            N(z,u) = \sum_{n,j\geq 0} n_{j,n}u^jz^n,
        \]
    where $z$ marks the number of edges and $u$ the root face valency.
\end{definition}

\begin{theorem}\label{thm:clt_nonsep}
    Let $\map{p} \in \mathcal{N}$ be a fixed map and $X_n$ the number of pattern occurrences of $\map{p}$ in a random planar 2-connected map of size $n$. Then,
    \[
        \frac{X_n-\mu_n}{\sigma_n} \rightarrow \mathcal{N}(0,1)
    \]
    where $\mu_n = c_1n$ and $\sigma_n = c_2\sqrt{n}$ for some computable constants $c_1,c_2 \geq 0$  as $n\rightarrow \infty$.
\end{theorem}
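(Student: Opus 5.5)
We follow the scheme used for bipartite maps. The proof has three parts: a catalytic functional equation for 2-connected maps enriched by the marking variable $x$, a check that this equation meets the hypotheses of Theorem~\ref{thm:dny}, and the probabilistic conclusion through Lemma~\ref{lem:S1}, Lemma~\ref{lem:moments} and Theorem~\ref{thm:gao}.

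\emph{The functional equation.} We start from Tutte's root-edge decomposition of non-separable maps. There the root edge is either the single edge (or loop) map, or it is deleted, or it is contracted; the series/parallel decomposition gives
\[
N(z,u)=zu^2+zu\,\frac{N(z,u)-N(z,1)}{u-1}\cdot u+\dots
\]
in one of its standard forms (equivalently, via the substitution $N(z,u)=$ maps without cut vertices obtained from general maps by a composition scheme). Enriched with $x$, we add two kinds of terms, exactly as in \eqref{eq:bipartite}:
\begin{itemize}[label={}]
\item a term $x\,r z^{e-v+1}u^{-(v-2)}P^{\mathcal N}_{v-1}(z,u,x)$ for a root edge incident to an isolated labeled occurrence;
\item a sum over intersection types $i$, weighted by $x^2 r_i z^{e_i-v_i+1}u^{-(v_i-2)}P^{\mathcal N}_{v_i-1}$ and by the products of simple-boundary 2-connected fillings of the deep faces.
\end{itemize}
A subtlety is that gluing a pattern (or an intersection type) onto a 2-connected map with partial simple boundary must again be 2-connected, and conversely. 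This holds because $\map{p}\in\mathcal N$ has no cut vertex. Intersection types of two 2-connected occurrences may contain cut vertices, but these are pinch points on the root face, which are handled by the splitting argument from the general case. Deep faces are filled by non-separable maps with simple boundary; since such a face is bounded by a cycle of the intersection type, no cut vertex is created. Pinched deep faces are handled by the same vertex-gluing remark as before.

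Next we prove an analogue of Lemma~\ref{lem:partial} for $P^{\mathcal N}_i$. It should show that $P^{\mathcal N}_i$ is a polynomial in $N(z,u,x)$ and $n_0,\dots,n_i$. For 2-connected maps the recursion may be simpler, since the boundary of a non-separable map is already a cycle except in degenerate cases; only the constraint on the starting path is needed.

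\emph{The analytic conditions.} This is the main obstacle. We need the analogue of Lemma~\ref{lem:mov_bi}: the coefficients $n_i(z,x)=[u^i]N(z,u,x)$ must be analytic functions of $z$, $x$ and $N(z,1,x)$ on a polydisc $|z|\le z_1$, $|N(z,1,x)|\le C$, $|x|\le x_0$, where $z_1$ exceeds the radius of convergence $\rho(0)^{-1}=4/27$ of 2-connected maps. The plan is to write the system for $(n_i)_{i\ge 0}$ as a fixed point equation $\mathbf n=\Phi(\mathbf n)$ on a weighted $\ell^1$ space of sequences, obtained from the coefficient comparison of the catalytic equation, and then to show that $\Phi$ is a contraction on a ball. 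At $x=0$ this reduces to the classical computation of~\cite{DrPa}; the $x$-terms are a small perturbation for small $x_0$, because each involves only finitely many shifts of $u$. After this we substitute $u=1+v$ and rewrite the equation in the form required by Theorem~\ref{thm:dny}. We verify nonnegativity at $x=0$ and check that $\rho(0)^{-1}$ is the unique dominant singularity, using aperiodicity of $[z^n]N(z,1,0)$. This gives $[z^n]N(z,1,x)=c(x)n^{-5/2}\rho(x)^n(1+O(1/n))$.

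\emph{Conclusion.} Lemma~\ref{lem:bivar} yields the bivariate asymptotics for $k=\Theta(\sqrt n)$, and Lemma~\ref{lem:moments} gives $\mu_n,\sigma_n^2=\Theta(n)$. We must also check $\rho''(0)\rho(0)+\rho'(0)\rho(0)-\rho'(0)^2>0$ (nondegeneracy), or else accept the case $c_2=0$ allowed by the statement. To apply Lemma~\ref{lem:S1} when $\map p$ has a non-simple boundary, we use the exponential tail of the maximum degree of random 2-connected maps. This follows from the known results for random non-separable maps (degree distribution with geometric tails, as in~\cite{max_deg}), combined with a union bound over the $O(n)$ vertices. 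Lemma~\ref{lem:S1} thus replaces $m^\circ_{n,k}$ by the pairwise-intersecting counts generated by $N(z,u,x)$, and Theorem~\ref{thm:gao} finishes the proof.
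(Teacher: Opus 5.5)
\paragraph{Gap: the remainder after removing a root-incident occurrence is not 2-connected.}
Your overall plan matches the paper: an enriched root-edge equation, analyticity of the $n_i(z,x)$ by a contraction argument, then Theorem~\ref{thm:dny}, Lemmas~\ref{lem:bivar}, \ref{lem:S1}, \ref{lem:moments} and Theorem~\ref{thm:gao}. But the combinatorial step that is specific to 2-connected maps fails.

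You claim that gluing an occurrence of $\map{p}$ onto a 2-connected map with partial simple boundary gives a 2-connected map \emph{and conversely}. On that basis you write the pattern terms with $P^{\mathcal N}_{v-1}$, which by your own remark is essentially $N(z,u,x)$ with the low-valency terms removed. The forward direction is fine. The converse is false: delete the root edge $e_0$ and the interior of a root-incident occurrence from a 2-connected map $\map{m}$, and the remainder is in general not 2-connected.

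The simplest instance is $\map{m}=\map{p}$ itself, rooted on its outer boundary, with $v\ge 3$. The remainder is then a path of $v-1$ edges, which is full of cut vertices. In general, let $C$ be the boundary cycle of the occurrence. The part of $\map{m}$ on the root-face side of $C$, together with $C$, is 2-connected. Deleting $e_0$ from it leaves a \emph{series chain} of blocks (2-connected maps or single edges) glued at cut vertices lying on the path $C-e_0$, and each block contributes at least one edge to that path. The fact that $\map{p}$ has no cut vertex says nothing about these cut vertices.

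So your equation only counts configurations in which this chain is a single 2-connected block. It is therefore not the generating function of 2-connected maps with pairwise-intersecting labeled occurrences, and already $\mu_n$ (hence $c_1$) would come out wrong.

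\paragraph{The missing idea and the fix.}
This is exactly what Proposition~\ref{thm:nonsep_dec} supplies. Replace $P^{\mathcal N}_{v-1}$, and likewise $P^{\mathcal N}_{v_i-1}$ in the intersection-type terms, by the generating function of non-empty sequences of blocks with prescribed total contribution to the boundary path of the occurrence. To get it, mark with an auxiliary variable $w$ the edges each block contributes to that path; a block of root-face valency $m$ contributes between $1$ and $m-1$ of them, and a single edge contributes one. Then extract the appropriate coefficient in $w$ from the sequence construction.

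This yields the polynomials $F_{v-2}(z,u,N(z,u,x),n_2(z,x),\dots)$. They still have the shape required by the fixed-point argument of Lemma~\ref{lem:nonsep_ana} and by Theorem~\ref{thm:dny}, so the rest of your plan then goes through. The same series structure is also what produces the first (non-pattern) term of the paper's equation, which your sketch leaves unspecified.

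\paragraph{Minor points.}
A 2-connected pattern has a simple boundary. Two occurrences therefore intersect only by sharing a face, so every intersection type is itself 2-connected and no pinch points arise; your remark about cut vertices in intersection types is incorrect, though harmless. Consequently the simple-boundary case of Lemma~\ref{lem:S1} applies directly, and the maximal-degree tail bound for non-separable maps that you invoke (and attribute to \cite{max_deg}) is not needed.
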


2-connected planar maps always have a simple boundary. Therefore, the notion of maps with a (partial) simple boundary is redundant. However, the decomposition process is more involved than in the case of bipartite maps. Deleting the root edge may result in a non-empty sequence of 2-connected planar maps connected at newly created cut vertices, and we want to control the valency of both faces incident to the root edge. Further, we will restrict our considerations to patterns which are themselves 2-connected maps. If we allow pinch points along the boundary of the pattern, deleting the root edge results not only in a non-empty sequence of 2-connected planar maps, we also restrict cut-vertices to appear in certain places along the boundary that do not correspond to the pinch point of the pattern occurrence. However, in practice, these cases can be treated completely analogously. One just has to pay extra attention to these restrictions and, of course, justify using Lemma~\ref{lem:S1} that requires exponential bounds on the maximal vertex degree in the map. 

\begin{proposition} \label{thm:nonsep_dec}
    Let $\map{p}$ be a planar 2-connected map with $e$ edges, a boundary of length $v$ and $r$ rotational symmetries. Further, we enumerate its intersection types by $1$ to $I$, and let $e_i$ be the number of edges, $v_i$ the valency of the root face, $d_{i,j}$ the number of deep faces with valency $j$ and $r_i$ the number of rotations of intersection type $i \in [I]$. Then the generating function $N(z,u,x)$ for planar 2-connected maps with some distinguished labeled pattern occurrences of $\map{p}$, intersecting at most pairwise satisfies the equation
    \begin{align*}
        N(z,u,x) &= zu\frac{zu+\frac{uN(z,1,x)-N(z,u,x)}{1-u}}{1-\left(zu+\frac{uN(z,1,x)-N(z,u,x)}{1-u}\right)} \\
        &\quad + x\frac{rz^{e-v+1}}{u^{v-2}}F_{v-2}(z,u,N(z,u,x),n_2(z,x),\dots,n_{v-2}(z,x))\\
        &\quad + x^2\sum_{i=0}^I \frac{r_iz^{e_i-v_i+1}}{u^{v_i-2}}F_{v_i-2}(z,u,N(z,u,x),n_2(z,x),\dots,n_{v_i-2}(z,x))\prod_{j\geq 2} n_j(z,x)^{d_{i,j}}
    \end{align*}
    where all $F_v(z,u,y_0,y_1,\dots, y_{v-1}), v>0$ are polynomials.
\end{proposition}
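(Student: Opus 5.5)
We prove the functional equation with Tutte's root edge decomposition for 2-connected maps, split according to whether the root edge lies in a distinguished labeled pattern occurrence. The weight $x$ attaches only to occurrences containing the root edge, exactly as in the proofs for general and bipartite maps.

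\emph{Root edge in no distinguished occurrence.} Take $N(z,u,x)$ without the single-edge map. Deleting the root edge of a 2-connected map with at least two edges gives a non-empty sequence of 2-connected blocks glued in series at the new cut vertices, running from the root vertex to the other endpoint of the root edge. Each block is either a single edge, with weight $zu$, or a 2-connected map with at least two edges. In the second case we reroot so that the root face is preserved. Its root face valency is then split between the new root face and the face on the other side of the deleted edge, which gives the divided difference $\frac{uN(z,1,x)-N(z,u,x)}{1-u}$. Summing the sequence gives the factor $\frac{Y}{1-Y}$, with $Y=zu+\frac{uN(z,1,x)-N(z,u,x)}{1-u}$. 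The extra factor $zu$ is the root edge itself.

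We must also check that this deletion neither destroys nor creates distinguished occurrences. Since the deleted edge belongs to no distinguished occurrence, every distinguished occurrence lies inside one block. Because $\map{p}$ is 2-connected, it cannot straddle a cut vertex. Labels and $x$-weights are therefore simply inherited, and the construction is reversible.

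\emph{Root edge in a distinguished occurrence.} Here the root edge lies either in an isolated distinguished occurrence or in an intersecting pair of some type $i$. The pair is determined, since occurrences intersect at most pairwise. Remove the interior of the occurrence (or of the intersection type), keep its boundary cycle of length $v$ (resp.\ $v_i$), and note that this cycle is simple because the map is 2-connected. Deep faces of an intersection type are filled independently by 2-connected maps with root face valency $j$, each counted by $n_j(z,x)$; this gives the product $\prod_j n_j^{d_{i,j}}$. The factor $r$ (resp.\ $r_i$) accounts for the choice of rotation, and the power of $z$ counts the interior edges. It remains to describe the outer part. It is a 2-connected map whose root face has a boundary path of length $v-1$ (resp.\ $v_i-1$) to which the pattern is glued, and this path may be pinched.

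To obtain $F_{v-2}$ we redo the analysis that produced the partial simple boundary polynomials of Lemma~\ref{lem:partial}. Gluing the pattern along $v-1$ consecutive boundary edges of the outer map must yield a 2-connected map. We expand according to how the outer map's boundary path decomposes into blocks at cut vertices on that path. Inclusion--exclusion over the ways the glued path can fail to be a single block of prescribed root face valency then expresses the generating function as a polynomial in $z$, $u$, $N(z,u,x)$ and $n_2,\dots,n_{v-2}$. This is the 2-connected analogue of writing $P_i$ as $B$ minus corrections involving $b_j$ and $P_j$. The division by $u^{v-2}$ then records the loss of root face valency when the pattern is glued.

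We expect this step to be the main obstacle: showing that the correction terms involve only finitely many coefficients $n_j$ with $j\le v-2$, and organising the recursion so that it closes into a polynomial $F_{v-2}$. The plan is to prove this by induction on $v$, peeling off the first block along the path as in the partial-boundary recursion.
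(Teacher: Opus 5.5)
There is a genuine gap. Your first case (root edge in no distinguished occurrence) is fine and matches the first term of the paper's proof. Your check that a 2-connected occurrence stays inside one block is a welcome addition.

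The second case is the actual content of the proposition, and there you never establish that the outer part is counted by a polynomial $F_{v-2}$ in $z,u,N(z,u,x),n_2(z,x),\dots,n_{v-2}(z,x)$. You state this only as a plan and flag the finiteness and closure yourself as an unresolved obstacle. The plan also misidentifies the outer object. It cannot be ``a 2-connected map whose boundary path may be pinched'': in a 2-connected map with at least two edges every face is bounded by a simple cycle, so nothing can be pinched. Importing the subtractive recursion of Lemma~\ref{lem:partial} is also the wrong tool. In the 2-connected setting there are no bad configurations to remove by inclusion--exclusion; the cut vertices are the structure being counted, not defects.

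The missing idea, which is what the paper does, is to delete the root edge together with the interior of the occurrence (or intersection type). What remains is again Tutte's series decomposition:
\begin{itemize}
\item it is a non-empty sequence of single edges and 2-connected blocks, and all cut vertices lie on the glued boundary path;
\item each 2-connected block of root-face valency $m$ shares a consecutive arc of between $1$ and $m-1$ edges with that path;
\item conversely, every such sequence, closed up by the pattern and the root edge, is 2-connected.
\end{itemize}
Marking the shared edges by an auxiliary variable $w$ gives, for each block,
\[
\sum_{m\ge 2} n_m(z,x)\,u^m\,(w+\dots+w^{m-1})=\frac{wN(z,u,x)-N(z,uw,x)}{1-w}.
\]
The outer part is then the coefficient of the appropriate power of $w$ ($[w^{v-2}]$ in the paper's indexing) in $Y_w/(1-Y_w)$, where $Y_w=zu^2w+\frac{wN(z,u,x)-N(z,uw,x)}{1-w}$.

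Polynomiality is immediate. For $j\ge 1$, $[w^j]$ of the block series equals $N(z,u,x)-\sum_{2\le m\le j}n_m(z,x)u^m$. Since $Y_w$ has no constant term in $w$, the extracted coefficient is a finite polynomial in $z,u,N(z,u,x)$ and $n_2,\dots,n_{v-2}$. This is exactly the closure you expected to need an induction on $v$ for. The intersection-type terms follow identically with $v_i$ in place of $v$, and deep faces are filled by $n_j$ as you describe.
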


\begin{proof}
    Analogous to the bivariate case, the first term in the equation counts all maps where the root edge is not incident to a pattern occurrence and the rest corresponds to the cases where the root edge is incident to a labeled pattern occurrence or an intersection type that we isolate.
    Next, let us point out that in case that the root edge is incident to a labeled pattern occurrence, the map without the pattern occurrence (or the occurrence of the pair of intersecting patterns) consists of a non-empty sequence of 2-connected maps or single edges as illustrated in Figure~\ref{fig:non-sep}. 

    \begin{figure}
        \centering
        \includegraphics[width=0.4\linewidth]{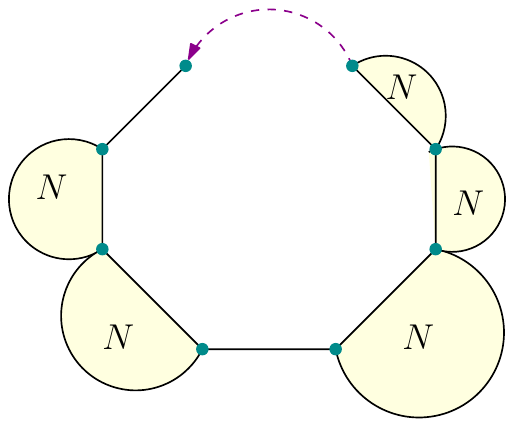}
        \caption{Decomposition of 2-connected maps}
        \label{fig:non-sep}
    \end{figure}
    
    To describe those maps, we introduce a variable $w$ that marks the number of edges incident to the pattern occurrence in addition to $z$ marking the number of edges and $u$ marking the root face valency. Each of the 2-connected maps in this sequence contribute at least one edge to the boundary of the pattern occurrence(s) and at most one less than their root face valency. Therefore, the generating function of each of those maps equals
    \begin{align*}
        \sum_{m\geq 2} n_m(z,x)u^m(w+w^2+\dots+w^{m-1})
        &=\sum_{m\geq 2} n_m(z,x)u^m\frac{w-w^m}{1-w}\\
        &=\frac{wN(z,u,x)-N(z,uw,x)}{1-w}
    \end{align*}
    Thus, the generating function of a non-empty sequence of edges or 2-connected maps that contribute $b-2$ edges to the boundary of a pattern occurrence or an intersection type is described by
    \begin{align*}
        [w^{b-2}] \frac{zu^2w+\frac{wN(z,u,x)-N(z,uw,x)}{1-w}}{1-\left(zu^2w+\frac{wN(z,u,x)-N(z,uw,x)}{1-w}\right)}
    \end{align*}
    which obviously is a polynomial $F_{b-2}(z,u,y_0,y_1,\dots, y_{b-3})$ in $z,u,$ $y_0=N(z,u,x)$ and $y_1 =n_2(z,x), \dots, y_3 = n_{b-2}(z,x)$.

    Subsequently, we add $e-v+1$ edges of the pattern occurrence which decreases the root face valency by $v-2$. Further, we multiply the expression by the number of rotational symmetries of the pattern to account for all possible ways of attaching the pattern occurrence to the sequence of 2-connected maps and mark the term with $x$.

    In the case where the root edge is incident to an intersection type, the above argument holds as well. Additionally, the deep faces are filled by 2-connected maps with suitable root face valency (which have a simple boundary by definition).
\end{proof}

\begin{lemma}\label{lem:nonsep_ana}
    The generating functions $n_i(z,x)$ of 2-connected planar maps with root face valency~$i$ are analytic functions in $z,x$ and $N(z,1,x)$ if $|z|\leq \frac{1}{6}$, $|N(z,1,x)| \leq \frac{1}{18}$ and $|x| \leq x_0$ for some $x_0>0$, small enough.
\end{lemma}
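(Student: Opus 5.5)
We want to show that each coefficient $n_i(z,x)=[u^i]N(z,u,x)$ is analytic in $z$, $x$ and $y:=N(z,1,x)$ on the stated domain. We follow the fixed point argument of~\cite{DrPa}, in the form already used for Lemma~\ref{lem:mov_bi} and Lemma~\ref{lem:mov_bip}. Treat $y$ as an independent complex parameter and rewrite the equation of Proposition~\ref{thm:nonsep_dec} coefficientwise in $u$. Since $N(z,u,x)=\sum_{m\ge 2} n_m u^m$ and
\[
\frac{uN(z,1,x)-N(z,u,x)}{1-u}=\sum_{m\ge 2}u^m\Bigl(y-\sum_{j<m}n_j\Bigr)=\sum_{m\geq 2}u^m\sum_{j\ge m}n_j,
\]
the quantity $A(u):=zu+\frac{uy-N}{1-u}$ has coefficients that are linear in $(n_j)_j$ and $y$. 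The equation then becomes a system $n_m = \Phi_m(z,x,y,(n_j)_{j\geq 2})$, $m\ge 2$. Here $\Phi_m$ collects the $[u^m]$-coefficient of $zu\,A/(1-A)$, together with the $x$ and $x^2$ terms. The latter involve finitely many polynomials $F_{v-2}$, $F_{v_i-2}$ in $u,N,n_2,\dots$, multiplied by the deep-face factors $\prod_j n_j^{d_{i,j}}$ and divided by $u^{v-2}$, respectively $u^{v_i-2}$. The division only shifts indices, and since $F_b$ is built from $[w^b]$ of the sequence construction, each such term is a well-defined power series in $u$.

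We regard $\Phi=(\Phi_m)_m$ as a map on a closed ball of $\ell^1(\C)$, with the norm $\|(n_j)\|=\sum_j |n_j|$, possibly weighted by $\theta^j$ for some $\theta\ge 1$ to control the divisions by powers of $u$. First we show that $\Phi$ maps the ball $\{\|n\|\le R\}$ into itself for $|z|\le \frac16$, $|y|\le\frac1{18}$ and $|x|\le x_0$. The $x=0$ part is bounded using $\sum_m|[u^m]A|\le |z|+\sum_m\sum_{j\ge m}|n_j|$. Since $\sum_m\sum_{j\ge m}|n_j|=\sum_j (j-1)|n_j|$, we cannot use plain $\ell^1$ here. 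Instead we will work with the weighted norm $\sum_j j|n_j|$, or note that $\sum_{j}n_j = y$ in the actual solution and bound the tails via $|y|$ together with the unweighted norm. We then need $\|A\|<1$ so that the geometric series $1/(1-A)$ converges in the Wiener algebra of absolutely summable power series. The constants $\frac16$ and $\frac1{18}$ should be exactly what makes $\|A\|\le \frac16+\frac1{18}+R'<1$ and closes the self-map inequality for a suitable radius $R$. The $x$ and $x^2$ terms are finite polynomial expressions bounded by $C|x|$, so they are absorbed by taking $x_0$ small.

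Second, we show that $\Phi$ is a contraction. Its Fréchet derivative in $(n_j)$ has operator norm bounded by $|z|\,\|\partial_n A\|/|1-A|^2+O(|x|)$, and this is $<1$ on the ball for the given constants. By the Banach fixed point theorem there is a unique solution $(n_j(z,x,y))$ in the ball. Because $\Phi$ depends analytically on $(z,x,y)$ and the contraction is uniform, each $n_j$ is analytic in $(z,x,y)$; this follows from the analytic implicit function theorem on Banach spaces, or from uniform limits of the analytic iterates. Substituting $y=N(z,1,x)$ recovers the true coefficients, by uniqueness of formal power series solutions near the origin. This gives the claim, and hence the hypotheses of Theorem~\ref{thm:dny} after the change $v=u-1$.

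The main obstacle is the first step: choosing a norm in which the operator $n\mapsto \bigl(\sum_{j\ge m}n_j\bigr)_m$ and the divisions by $u^{v-2}$ are bounded, and then verifying the explicit numerical inequalities with $|z|\le\frac16$ and $|y|\le\frac1{18}$. I would first try the weighted norm $\sum_j j|n_j|$ combined with the constraint $\sum_j n_j=y$, and compute the self-map and contraction bounds for the $x=0$ part explicitly.
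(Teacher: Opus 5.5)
\textbf{Verdict: genuine gap.} Your strategy matches the paper's. Its proof is only a sketch: it rewrites the equation of Proposition~\ref{thm:nonsep_dec} and refers to the $\ell^1$ contraction of Lemma~\ref{lem:mov_bip}, with the bounds $|z|\le\frac16$, $|N(z,1,x)|\le\frac1{18}$. However, the step you defer as the ``main obstacle'' is not just unchecked arithmetic: the norms you propose cannot work.

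\emph{The proposed spaces fail.} With $y$ a free parameter, $[u^k]\frac{uy-N}{1-u}=y-\sum_{j\le k}n_j$ for $k\ge1$ (your indices are shifted by one, which is harmless). This tends to $y-\sum_j n_j$, which is nonzero for a generic element of your ball. Hence $A$ is not summable for any weights $\theta^j$ with $\theta\ge1$, nor for weights $j$. So $1/(1-A)$ does not exist in the corresponding Wiener algebra, and $\Phi$ does not map into the space.

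Restricting to $\sum_j n_j=y$ does not help, because $\Phi$ does not preserve this hyperplane. At $x=0$ the image evaluated at $u=1$ is $zA(1)/(1-A(1))$ with $A(1)=z+\sum_j(j-1)n_j$, which in general is not $y$.

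Writing the coefficients of $A$ as tails $\sum_{j>k}n_j$ removes $y$ from the problem, and then uniqueness fails. Clearing denominators gives $(1-u)N=((1-u)zu+uy-N)(zu+N)$. For every $y$ near $N(z,1,0)$, its power series solution with $N(0)=0$ is analytic on a disk of radius $>1$; for instance, for $z=\frac1{10}$ and $0.011\le y\le 0.02$ the branch points have modulus about $1.1$. This solution automatically has $N(1)=y$, since at $u=1$ the equation reads $0=(y-N(1))(z+N(1))$. So your tail-form map has a continuum of fixed points near the true one in the weighted norm, and it cannot be a contraction. This is the catalytic nature of the equation: $y$ must remain an external parameter.

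\emph{The fix: weight the other way.} Take $\|{\bf n}\|_q=\sum_j|n_j|q^j$ with $q<1$, i.e.\ work on a disk $|u|\le q$ strictly inside the unit circle, where $\frac{uy}{1-u}$ is harmless. Then $\|\frac{uy}{1-u}\|_q=\frac{q|y|}{1-q}$ and $\|\frac{N}{1-u}\|_q\le\frac{\|N\|_q}{1-q}$. Take $q=\frac12$, $|z|\le\frac16$, $|y|\le\frac1{18}$ and $\|N\|_q\le R=\frac1{40}$. Then:
\begin{itemize}
\item $\|A\|_q\le\frac5{36}+\frac1{20}<0.19$;
\item $\|zuA/(1-A)\|_q\le\frac1{12}\cdot\frac{0.19}{0.81}<0.02<R$, so the ball is mapped into itself;
\item the Lipschitz constant is at most $\frac{|z|q}{(1-q)(1-\|A\|_q)^2}<0.26$.
\end{itemize}
The $x$-terms are genuine power series for every ${\bf n}$, because the $[w^j]$-coefficient in the sequence construction has $u$-valuation at least $j+1$. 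Dividing by $u^{v-2}$, or extracting $n_j$ for deep faces, only costs powers of $q^{-1}$, which are absorbed by taking $x_0$ small. Note that larger weights make divisions by $u$ cheaper, which is why you reached for $\theta\ge1$. But these divisions always come with a factor $x$, whereas the catalytic term forces $q<1$.

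With this norm the rest of your argument goes through: the Banach fixed point, analyticity of the limit in $(z,x,y)$, and identification with the true coefficients for small $|z|$ followed by analytic continuation.
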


\begin{proof}
    First, we reformulate the equation to a polynomial expression
    \begin{align*}
        N(z,u,x) &= \left(zu-zu^2+uN(z,1,x)-N(z,u,x)\right)(zu+N(z,u,x)) \\
        &\qquad + xF(z,u,N(z,1,x),n_2(z,x),\dots, n_{2v}(z,x)).
    \end{align*}
    Now, the proof is analogous to the one of bipartite maps. Since the singularity of $N(z,1,0)$ is at $z=\frac{4}{27}$, and $N(\frac{4}{27},1,0)=\frac{1}{27}$ and $u(\frac{4}{27})=\frac{3}{2}, N(\frac{4}{27},\frac{3}{2},0) = \frac{1}{9}$, we can show convergence for $|z|\leq \frac{1}{6}, |N(z,1,x)|\leq \frac{1}{18}, |u|\leq \frac{7}{4}$ and $|x|<x_0$, where $x_0$ is determined by the maximum admitted in this range of values by the terms in the functional equation that appear with a factor of $x$.
\end{proof}

\begin{proof}[Proof of Theorem~\ref{thm:clt_nonsep}]
The proof is absolutely analogous to general and bipartite maps.
\end{proof}

\section{Proofs}\label{sec:proofs}

\subsection{Proof of Lemma \ref{lem:S1}}

First, we distinguish maps according to the number of patterns that appear. Let $S_1$ be the set of maps in $\mathcal{C}$ with less than $\mu_n/2$ pattern occurrences. Then the contribution of maps in $S_1$ with $k$ labeled patterns to $m_{n,k}$ is bounded by
\[
    (\mu_n/2)^k|S_1| \leq (\mu_n/2)^km_n
\] 
since there are at most $(\mu_n/2)_k$ ways to mark $k$ patterns in a map of the set $S_1$.

Next, we have a closer look at the contribution of the set maps $S_2$ with more than $\mu_n/2$ pattern occurrences. Let \map{m} in $S_2$ with $p$ occurrences of the pattern \map{p} and let $L(\map{m})$ be the set of maps consisting of the map \map{m} with $k$ labeled pattern occurrences. 

Further, let $c(n)$ be the number of ways a pattern occurrence can intersect with another. Notice, that 
\begin{enumerate}[(a)]
    \item if \map{p} has a simple boundary, $c(n)$ is bounded by a constant given by $vf^2$ , where $f$ is the number of interior faces of \map{p} and $v$ is the maximal face valency of the interior faces. This follows because a face $F$ in \map{m} can intersect with at most $f$ different faces of an occurrence of \map{p} and the intersection can be rotated in at most $v$ ways.
    \item if \map{p} has a boundary with $m$ pinch points and $\ell$ is the maximal number of components that are produced by deleting one of these pinch points, then $c(n)$ is bounded by $\max\{cf^2, \binom{\Delta(\map{m})}{2\ell}\}$, where $\Delta(\map{m})$ is the maximal vertex degree in \map{m}. This follows because two pattern occurrences can intersect by sharing the same pinch point on their boundary and there are $\binom{\Delta(\map{m})}{2\ell}$ ways to choose $2\ell$ corners in which the components of the two occurrences can be located.
\end{enumerate}

Further, since there are $(p)_k$ ways to label $k$ pattern occurrences the size of $L(\map{m})$ is exactly $(p)_k$.
If we estimate from above the number of ways to label $k$ patterns where at least one labeled occurrence intersects at least two other occurrences, there are at most $p$ choices for the pattern occurrence which intersects two others and $k$ choices for its label. There are further at most $c(n)^2$ choices for the two pattern occurrences that the labeled occurrence intersects and $(k-1)$ and $(k-2)$ choices for their labels respectively. The rest of the labeled pattern occurrences can be chosen freely and we obtain that there are in total at most
\[
    (pk)\cdot \left(c(k-1)\right)\cdot \left(c(k-2)\right)\cdot (p-3)_{(k-3)}
\]
ways to label $k$ pattern occurrences such that there exists at least one labeled occurrence that intersects two other labeled occurrences. Comparing this number to the size of $L(\map{m})$ yields
\[
    \frac{c^2k(k-1)(k-2)p(p-3)(p-4)\cdots (p-k+1)}{p(p-1)\cdots (p-k+1)} \leq \frac{c^2k^3}{(p-2)^2}.
\]

At this point, we make another case distinction. Let $S_{2a} \subseteq S_2$ be the subset of $S_2$ where $\Delta(\map{m}) < \log(n)^2$ and $S_{2b} \subseteq S_2$ the subset where $\Delta(\map{m}) \geq \log(n)^2$.
\begin{itemize}
    \item 

If $\map{m} \in S_{2a}$, then $c(n) < \log(n)^{4\ell}$ and the above is bounded by $\frac{C\log(n)^{8\ell}}{\sqrt{n}}$ for some constant $C$, since $p>\mu_n/2$ and $\mu_n = \Theta(n)$.

\item If $\map{m} \in S_{2b}$, we can estimate the above by $k^3$, since it has to hold $c\leq p$. However, the probability that $\Delta(m) > \log(n)^2$ is bounded exponentially. That is, $S_{2b} \leq e^{-\delta \log(n)^2}m_n$.
\end{itemize}

By using the rough estimates $|S_1| < m_n$ and $\sum_{\ell \geq \mu_n/2} (\ell)_k\, m_{n,\ell} < m_{n,k}^\circ$, the considerations above sum up to
\[
     m_{n,k}^\circ \leq m_{n,k}^{\circ,\times} + \left(\frac{\mu_n}{2}\right)^k m_{n} + O\left(\left(\frac{C\log(n)^{8\ell}}{\sqrt{n}}+\frac{k^3}{n^{
     \delta \log n
     }}\right)m_{n,k}^\circ\right).
\]

\subsection{Proof of Lemma~\ref{lem:mov_bi}}

\begin{proof}
	We observe that the generating functions $b_{i}(z,x)$ satisfy a recursion of the form
	\begin{align*}
		b_{0}(z,x) &= 1\\
		b_{i}(z,x) &= z\sum_{j=0}^{i-1}b_{j}(z,x)b_{i-j-1}(z,x)+z(B(z,1,x)-\sum_{j =0}^{i-1}b_{j}(z,x)) \\
        &\qquad +x[u^i] P(z,u,B(z,u,x),b_0(z,x),\dots,b_k(z,x)) \\
        &\qquad + x^2[u^i]Q(z,u,B(z,u,x),b_0(z,x),\dots,b_\ell(z,x))
	\end{align*}
    by Proposition~\ref{thm:bipartite}, where we denote
    \begin{align*}
        p_i(z,b_{i-k'}(z,x), b_{i-k'+1}(z,x),\dots, &b_{i}(z,x),b_0(z,x),\dots,b_k(z,x)) \\
        &:= [u^i] P(z,u,B(z,u,x),b_0(z,x),\dots,b_k(z,x))\\
        q_i(z,b_{i-\ell'}(z,x), b_{i-\ell'+1}(z,x),\dots, &b_{i}(z,x),b_0(z,x),\dots,b_\ell(z,x)) \\
        &:= [u^i] Q(z,u,B(z,u,x),b_0(z,x),\dots,b_\ell(z,x)).
    \end{align*}
    Now, we know that equation~\eqref{eq:bipartite} is satisfied for small $z$ and $u$ close to~$1$ and yields analytic solutions $B(z,1,x)$ and $B(z,u,x)$. We also know that for $x=0$, the radius of convergence of $B(z,1,0)$ is $1/8$ and $B(1/8,1,0)=5/4$. We therefore may assume $Y = B(z,1,x)$ is given and let $x_0$ be small enough such that for all $|x|\leq x_0$, we have $|Y| \leq 11/8$. Further, we define the operator $T: \ell^1 (\C) \rightarrow \ell^1(\C)$ which maps ${\bf y} = (y_0,y_1,y_2, \dots) \in \ell^1(\C)$ for fixed $|x| \leq x_0$ and $|z| \leq \frac{2}{15}$ to
    \begin{align*}
		T({\bf y})_i &= z\sum_{j=0}^{i-1}y_{j}y_{i-j-1}+z\left(Y-\sum_{j =0}^{i-1}y_{j} \right) \\
        &\qquad +xp_i(z,y_{i-k'},\dots, y_i,y_0,\dots,y_k) + x^2q_i(z,y_{i-\ell'},\dots,y_i,y_0,\dots,y_\ell)
	\end{align*}
    Since $P$ and $Q$ are polynomials there exists a maximum
    \[
        M_1 = \max_{|z|\leq \frac{2}{15}, |x|\leq x_0, |{\bf y}|_1\leq 2} \left\{ \left|P(z,1,|{\bf y}|_1,y_0,\dots,y_{k})+xQ(z,1,|{\bf y}|_1,y_0,\dots,y_{\ell})\right|  \,\right\},
    \]
    and
    \[
        M_2 = \max_{|z|\leq \frac{2}{15}, |x|\leq x_0, |{\bf y}|_1\leq 2} \left\{ \Big|\partial_{{\bf y},y_0,\dots,y_{\ell}}\Big(P(z,1,|{\bf y}|_1,y_0,\dots,y_{k})+xQ(z,1,|{\bf y}|_1,y_0,\dots,y_{\ell})\Big)\Big|\, \right\},
    \]
    such that we can further restrict $|x| < \min \left\{x_0,\frac{1}{60M1},\frac{1}{15M_2}\right\} := M$.

    Now, we want to show that if we look at a sequence $({\bf y_n})_{n\geq 0}$ with $\bf y_0 = 0$ and ${\bf y_n} = T({\bf y_{n-1}})$, it converges uniformly to a fixed point ${\bf y} = (y_0,y_1,\dots)$ with $y_i = b_i(z,x,Y)$ by the Banach fixed-point theorem.

    So, first we show that $T$ maps elements with $|{\bf y}| \leq 2$ to elements $|T({\bf y})| \leq 2$. Note that for $|z|\leq \frac{2}{15}$, $|x|\leq M$, we can simply estimate the norm by
    \begin{align*}
        \mid T({\bf y_n})-1\mid_1  &\leq |z| |{\bf y_{n-1}}|_1^2+|z||Y| + |z||{\bf y_{n-1}}|_1 + xM_1 \\
        &\leq \frac{8}{15}+\frac{2}{15}\frac{11}{8}+\frac{4}{15}+\frac{1}{60} \\ &\leq \frac{59}{60}+\frac{1}{60} 
    \end{align*}
    and thus $|T({\bf y_n})| \leq 2$. Further, we show that the map is a contraction. That is, 
    \begin{align*}
        \mid T({\bf y_n})-T({\bf y_{n-1}})\mid_1  &\leq |z|\left(|\bf {y_n}|+|{\bf y_{n-1}}|\right) |{\bf y_{n}}-{\bf y_{n-1}}|_1 + |z||{\bf y_n}-{\bf y_{n-1}}|_1 + xM_2|{\bf y_n}-{\bf y_{n-1}}|_1 \\
        &\leq \left(\frac{8}{15}+\frac{11}{60}+\frac{1}{15}+\frac{1}{15}\right) |{\bf y_n}-{\bf y_{n-1}}|_1\\
        &<|{\bf y_n}-{\bf y_{n-1}}|_1
    \end{align*}
    and therefore, we can conclude that for $|z|<\frac{2}{15}$ the above map gives us a sequence converging to a fixed point $\bf y$ with $y_i=b_i(z,x,Y)$ and therefore, we can express all solutions as $b_i(z,x,M(z,1,x))$ for $x$ small enough.
\end{proof}

\subsection{Proof of Lemma~\ref{lem:bivar}} 
\begin{proof}
    By Theorem~2 in~\cite{DrmotaNoyYu}, the coefficients of $B(z,1,x)$ asymptotically grow like
    \[
        [z^n]B(z,1,x) = c(x)n^{-5/2}\rho(x)^n\left(1+O\left(\frac{1}{n}\right)\right),
    \]
    where $\rho(x)^{-1}$ is the location of the dominant singularity of $B(z,1,x)$, depending on the value $x$. Since the dominant singularity at $x=0$ is unique, there will also be a unique dominant singularity for small  values of $x$. Further, we note that the asymptotic behavior is uniformly for $x$ small enough.\\ Now, we rewrite
    \[
        [z^n]B(z,1,x) = F_n(x) =  n^{\alpha}e^{nf(x)+g(x)+O(\frac{1}{n})}
    \]
    where $\alpha = -\frac{5}{2}, f(x) = \log \rho(x)$ and $g(x) = \log c(x)$ and use the Cauchy integral:
    \[
        [x^k]\frac{F_n(x)}{n^{\alpha}} = \frac{1}{2\pi i} \int_\gamma \frac {e^{nf(x) + g(x)+O(\frac{1}{n})}}{x^{k+1}}\, dx,
    \]
    where $\gamma$ is a contour that surrounds $x=0$. 
    In particular, we choose $\gamma$ to be a cycle with
    radius $\frac{k}{nf'(0)}$ (where we expect a saddle point). That is, $x = r e^{i\varphi}$ with $r = \frac{k}{nf'(0)}$ and $dx = ire^{i\varphi}d\varphi$, and therefore,
    \begin{align*}
        [x^k]\frac{F_n(x)}{n^{\alpha}} &= 
        \frac{1}{2\pi}r^{k}\int_{-\pi}^{\pi} \frac{e^{nf(re^{i\varphi})+g(re^{i\varphi})+O(\frac{1}{n})}}{e^{ik\varphi}}d\varphi.
    \end{align*}
    Since $f(x) = \log \rho(x)$ and $g(x) = \log c(x)$ are analytic, we can use their Taylor expansions at $re^{i\varphi} = 0$ and obtain
    \begin{align*}
        [x^k]\frac{F_n(x)}{n^{\alpha}} &= \frac{1}{2\pi}r^{k} \int_{-\pi}^\pi e^{\sum_{\ell=0}^3 \left(nf^{(\ell)}(0)+g^{(\ell)}(0)\right)\frac{(r e^{i\varphi})^\ell
        }{\ell!}+O(nr^4+\frac{1}{n})} e^{-ik\varphi}\, d\varphi \\
        & = \frac{1}{2\pi}   \left(\frac{f'(0)n}{ k}\right)^{k}
        e^{nf(0)+g(0)}
        \int_{-\pi}^\pi  e^{k(e^{i\varphi} - i\varphi) + \frac{k^2}{2n} \frac{f''(0)}{(f'(0))^2}  e^{2i\varphi} 
         + O\left(\frac{k}{n}\right)} \, d\varphi.
    \end{align*}
    Next, we are going to factor out the constant terms of the exponential functions in the exponent of the integrand. That is,
    \begin{align}
        &(f'(0)n)^k  e^{nf(0)+g(0)}\frac{1}{2\pi k^{k}}  
        \int_{-\pi}^\pi  e^{k(e^{i\varphi} - i\varphi) + \frac{k^2}{2n} \frac{f''(0)}{f'(0)^2} e^{2i\varphi} 
         + O\left(\frac{k}{n}\right)} \, d\varphi \nonumber\\
        & = (f'(0)n)^k e^{nf(0)+g(0)+\frac{k^2}{2n} \frac{f''(0)}{f'(0)^2}}  \frac{e^k}{2\pi k^{k}}  
        \int_{-\pi}^\pi  e^{k(e^{i\varphi} - 1 - i\varphi)} 
        e^{+\frac{k^2}{2n} \frac{f''(0)}{f'(0)^2} + O\left(\frac{k}{n}\right)} \, d\varphi. \label{eq:fnk}
    \end{align}
    In the last step, we already separated the integrand in two factors, where the second is a bounded function as $n\rightarrow\infty$. Thus, we expand the latter in the integral as
    \begin{align*}
        &  
        \int_{-\pi}^\pi  e^{k(e^{i\varphi} - 1 - i\varphi) + \frac{k^2}{2n} \frac{f''(0)}{(f'(0))^2}  (e^{2i\varphi} -1) 
        + O(\frac{k}{n})} \, d\varphi \\
        &=   
        \int_{-\pi}^\pi  e^{k(e^{i\varphi} - 1 - i\varphi) }\sum_{j\geq 0} \frac{1}{j!}\left(\frac{k^2}{2n} \frac{f''(0)}{(f'(0))^2}  (e^{2i\varphi} -1) + O\left(\frac{k}{n}\right)\right) ^j\, d\varphi\\
        &=\int_{-\pi}^\pi  e^{k(e^{i\varphi} - 1 - i\varphi)}d\varphi +
        \int_{-\pi}^\pi e^{k(e^{i\varphi} - 1 - i\varphi)}\sum_{j\geq 1}\frac{1}{j!}\left(\frac{k^2}{2n} \frac{f''(0)}{(f'(0))^2}  (e^{2i\varphi} -1) + O\left(\frac{k}{n}\right)\right) ^j d\varphi.
    \end{align*}
    Now, we may exchange the integral and the sum of the exponential function and, as $k^2/n=O(1)$, proceed to evaluate 
    
    \begin{align}\label{eq:I1}
    \int_{-\pi}^\pi  &e^{k(e^{i\varphi} - 1 - i\varphi)}d\varphi +O \left(
        \int_{-\pi}^\pi  e^{k(e^{i\varphi} - 1 - i\varphi)} \left(\frac{k^2}{2n} \frac{f''(0)}{(f'(0))^2} (e^{2i\varphi} -1) + \frac{k}{n}\right)d\varphi\right).
    \end{align}
    The first integral \eqref{eq:I1} can be evaluated by substituting back $z=e^{i\varphi}$ on a circle $\gamma'$ around $z=0$ with $d\varphi = \frac{dz}{iz}$ and interpreting the Cauchy integral as a coefficient extraction of the integrand. In particular, we have
    \begin{align*}
        \int_{-\pi}^\pi  e^{k(e^{i\varphi} - 1 - i\varphi)} d\varphi &= \frac{1}{ie^{k}}\int_{\gamma'}  \frac{e^{kz}}{z^{k+1}}dz = \frac{2i\pi}{ie^{k}}
        [z^k]e^{kz} = \frac{2\pi k^k}{e^{k}k!}
        .
    \end{align*}
 
    For the second integral in \eqref{eq:I1}, we use a variation of Laplace's method. We have
    \begin{align*}
        &\Re \left(k(e^{i\varphi} - 1 - i\varphi)\right) \le -k c_1\varphi^2
    \end{align*}
    for some constant $c_1> 0$, $|\varphi| \le \pi$ and $n$ large enough. 
    Therefore, we can bound the value of the integral by
    \begin{align*}
        \int_{-\pi}^\pi  e^{k(e^{i\varphi} - 1 - i\varphi)} \left(\frac{k^2}{2n} \frac{f''(0)}{f'(0)^2} (e^{2i\varphi} -1) + \frac{k}{n}\right)d\varphi &\leq  \int_{-\pi}^{\pi} e^{-c_1k \varphi^2} \left(c_2 |\varphi| \frac{k^2}{n} + \frac{k}{n}  
       \right) d\varphi       \\
        &= c_2\frac{k}{n}(1-e^{-c_1k\pi^2}) + \frac{\sqrt{\pi k}}{n}+ O\left(\frac{1}{n}\right),
    \end{align*}
    where $c_2>0$ is another constant factor.
    Putting everything together, we have 
    \begin{align*}
        \eqref{eq:I1} 
        &= \frac{2\pi k^k}{e^k k!} + O\left(\frac{k}{n}+\frac{\sqrt{k}}{n}\right)= \frac{2\pi k^k}{e^k k!}\left(1 + O\left(\frac{1}{n^{\frac{1}{4}}}\right)\right)
    \end{align*}
    since $\frac{2\pi k^k}{e^k k!} \leq \sqrt{\frac{2\pi}{k}}$ and consequently,
    \begin{align*}
        [x^k]\frac{F_n(x)}{n^\alpha} & = (f'(0)n)^k  e^{nf(0)+g(0)} \frac{e^k}{2\pi k^{k}}  
        \int_{-\pi}^\pi  e^{k(e^{i\varphi} - 1 - i\varphi) + \frac{k^2}{2n} \frac{f''(0)}{f'(0)^2} (e^{2i\varphi} -1) + O\left(\frac{k}{n}\right)} \, d\varphi\\
        &= \frac{1}{k!}(f'(0)n)^k  e^{nf(0)+g(0)+\frac{k^2}{2n} \frac{f''(0)}{f'(0)^2}}\left(1+O\left(\frac{1}{n^{\frac 14}}\right)\right).
    \end{align*}   
    Computing the derivatives of $f(x)$ in terms of $\rho(x)$ completes the proof of the theorem.
\end{proof}

\subsection{Proof of Lemma~\ref{lem:moments}}
\begin{proof}
    For the expectation, we simply compute
    \begin{align*}
        \E(X_n) &= \frac{[z^n]\partial_x F(z,0,x)|_{x=0}}{[z^n]F(z,0,x)|_{x=0}} \sim \frac{c'(0)n^{-\frac{5}{2}}\rho(0)^n+c(0)n^{-\frac{3}{2}}\rho(0)^{n-1}\rho'(0)}{c(0)n^{-\frac{5}{2}}\rho(0)^n}\sim \frac{\rho'(0)}{\rho(0)}n + \frac{c'(0)}{c(0)}
    \end{align*}
    and for the variance we then obtain
    \begin{align*}
        \V(X_n) &= \E(X_n(X_n-1)) + \E(X_n) - \E(X_n)^2\\
        &= \frac{[z^n]\partial_{x}^2 F(z,0,x)|_{x=0}}{[z^n]F(z,0,x)|_{x=0}} + \E(X_n) - \E(X_n)^2\\
        &\sim \frac{c''(0)n^{-\frac{5}{2}}\rho(0)^n+2c'(0)n^{-\frac{3}{2}}\rho(0)^{n-1}\rho'(0)+c(0)n^{-\frac{3}{2}}(n-1)\rho(0)^{n-2}\rho'(0)^2}{c(0)n^{-\frac{5}{2}}\rho(0)^n}\\
        &\quad +\frac{c(0)n^{-\frac{3}{2}}\rho(0)^{n-1}\rho''(0)}{c(0)n^{-\frac{5}{2}}\rho(0)^n}+ \E(X_n) - \E(X_n)^2\\
        &\sim \frac{\rho'(0)^2}{\rho(0)^2}(n^2-n) + \frac{2c'(0)\rho'(0)+c(0)\rho''(0)}{c(0)\rho(0)}n + \frac{\rho'(0)}{\rho(0)}n - \left(\frac{\rho'(0)}{\rho(0)}n + \frac{c'(0)}{c(0)}\right)^2\\
        &\sim \frac{\rho''(0)\rho(0)+\rho'(0)\rho(0)-\rho'(0)^2}{\rho(0)^2}n
    \end{align*}
    The asymptotics for the higher moments follow directly from Lemma~\ref{lem:bivar} and subsequent division by $[z^n] F(z,0,0)$.
\end{proof}

\subsection{Proof of Lemma~\ref{lem:partial}}
\begin{proof}
	In order to enumerate the number of maps with partial simple boundary of length $\ell$, we first consider all maps with root face valency at least $\ell+1$. The generating function of maps with root face valency greater than $\ell$ is described by
	\[
		B(z,u,x)-\sum_{j=0}^{\ell}b_{j}(z,x)u^{j}.
	\]
	If a map with root face valency larger than $\ell$ does not have a partial simple boundary of length $\ell$, we can decompose the first $\ell$ edges along the boundary into boundaries of bipartite maps with root face valency smaller than $\ell$ that are attached at the vertices of a partial simple boundary path that contains the root edge and whose length is at least $1$ and at most $\ell-2$.
    Since each cycle on a bipartite map must be of even length, the length of the partial simple boundary path containing the root edge has the same parity as~$\ell$ and therefore the generating function of bipartite maps without a partial simple boundary of odd length $\ell = 2i-1$ is 
	\[
		\sum_{j=1}^{i-1}P^{\mathcal{B}}_{2j-1}(z,u,x)u^{i-j}[u^{i-j}]B(z,u,x)^{2j}
	\]
	and that of maps without a partial simple boundary of even length $\ell = 2i$ is 
	\[
		\sum_{j=0}^{i-1}P^{\mathcal{B}}_{2j}(z,u,x)u^{i-j}[u^{i-j}]B(z,u,x)^{2j+1}.
	\]
    The statement about $p_{i,j}(z,x)$ follows directly from the fact that 
    \[
        p_{i,j}(z,x) = [u^j]P_{i}(z,u,x)
    \]
    and the recursion above.
\end{proof}

\section{Conclusion}

We presented a simplified proof of asymptotic normality for pattern counts in random planar maps based on direct asymptotic analysis of catalytic functional equations. In contrast to the approach in \cite{patternocc}, which reduces the problem to face counts, the present method computes the asymptotics of factorial moments directly from a bivariate generating function with one catalytic variable. 
The method applies in particular to bipartite maps and extends without major changes to 2-connected maps, where the reduction  cannot be applied directly. The proof is formulated in a way that also covers patterns with non-simple boundary whenever suitable bounds on the maximal vertex degree are available.
It is natural to ask whether the same strategy can be used for further classes of maps, such as triangulations or maps with prescribed face degrees. In these cases the corresponding functional equations typically are singularly perturbed discrete differential equations, and the universal asymptotic results used here are no longer directly applicable. Partial results for such equations have been obtained in recent work of the author in collaboration with Michael Drmota, but a general theory sufficient for treating pattern counts in these map classes is not yet available.


\bibliography{Bib_maps.bib}{}
\bibliographystyle{plain}

\end{document}